\newtheorem{lemma}{\textbf{Lemma}}
\newtheorem{theorem}{\textbf{Theorem}}
\newtheorem{assumption}{\textbf{Assumption}}
\newcommand{\prob}[1]{\text{Pr}\big\{#1\big\}}
\newcommand{\expect}[1]{\mathbb{E}\big\{#1\big\}}
\newcommand{\bv}[1]{{\boldsymbol{#1} }}
\newcommand{\script}[1]{{{\cal{#1} }}}
\newtheorem{definition}{\textbf{Definition}}
\newcommand{\huang}[1]{\textcolor{black}{#1}} 
\newcommand{\mtt}[1]{$\mathtt{#1}$}
\newcommand{\bisc}{$\mathtt{BISC}$}
\newcommand{\lbisc}{$\mathtt{LBISC}$}
\newcommand{\mle}{$\mathtt{MLE}$}
 \newenvironment{achievements}{\begin{list}{$\bullet$}{\topsep 0.05pt \itemsep -.1pt}}{\vspace*{1pt}\end{list}}
\begin{document}

\title{System Intelligence: Model, Bounds and Algorithms}

\author{Longbo Huang\\
IIIS@Tsinghua University\\
longbohuang@tsinghua.edu.cn
\thanks{This paper will be presented in part at the $17$th ACM International Symposium on Mobile Ad Hoc Networking and Computing (MobiHoc),  Paderborn, Germany, July 2016.}
\thanks{This work  was supported in part by the National Basic Research Program of China Grant 2011CBA00300, 2011CBA00301, the National Natural Science Foundation of China Grant 61361136003, 61303195, Tsinghua Initiative Research Grant, Microsoft Research Asia Collaborative Research Award, and the China Youth 1000-talent Grant.} 
} 

\maketitle


\begin{abstract} 
We present a general framework for understanding system intelligence, i.e., the level of system smartness perceived by users, and propose a novel metric for measuring intelligence levels of dynamical human-in-the-loop systems, defined to be the maximum average reward obtained by proactively serving user demands, subject to a resource constraint. Our metric captures two important elements of smartness, i.e., being able to know what users want and pre-serve them, and  achieving good resource management while doing so. We provide an explicit characterization of the system intelligence, and show that it is jointly determined by user demand volume (opportunity to impress),   demand correlation (user predictability), and system resource and action costs (flexibility to pre-serve).   

We then propose an online learning-aided control algorithm called Learning-aided Budget-limited Intelligent System Control (\mtt{LBISC}). We show that \lbisc{} achieves an intelligence level that is within $O(N(T)^{-\frac{1}{2}}+\epsilon)$ of the highest level, where $N(T)$ represents the number of data samples collected within a learning period $T$ and is proportional to the user population size in the system,  while guaranteeing an $O(\max( N(T)^{-\frac{1}{2}}/\epsilon, \log(1/\epsilon)^2))$ average resource deficit. Moreover, we show that \lbisc{} possesses an $O(\max( N(T)^{-\frac{1}{2}}/\epsilon$, $ \log(1/\epsilon)^2)+T)$ convergence time, which is much smaller compared to the $\Theta(1/\epsilon)$ time required for non-learning based algorithms. 
The analysis of \lbisc{} rigorously quantifies the impacts of data and user population (captured by $N(T)$), learning (captured by our learning method), and control (captured by \lbisc) on achievable system intelligence, and provides novel insight  and guideline into designing future smart systems.

%

\end{abstract}

\section{Introduction} 
Due to rapid developments in sensing and monitoring, machine learning, and hardware manufacturing, building intelligence into systems has recently received strong attention, and clever technologies and products have been developed to enhance user experience. For instance, recommendation systems \cite{netflix-15}, smart home \cite{smart-home-amazon-echo-16}, artificial intelligence engines  \cite{invest-ai-15}, and user behavior prediction \cite{weber-searching-11}. 
Despite the prevailing success in practice,  there has not been much theoretical understanding about system smartness. In particular, how do we measure the intelligence level of a system? How do we compare two systems and decide which one is smarter? What elements in a system contribute most to the level of smartness? Can the intelligence level of a system be pushed arbitrarily high? 

Motivated by these fundamental questions, in this paper, we propose a general framework for modeling system smartness and propose a novel metric for measuring system intelligence. 
Specifically, we consider a discrete time system where a server serves a set of applications of a user. 
The status of each application changes from time to time, resulting in different demand that needs to be fulfilled. The server observes the demand condition of each application over time and decides at each time whether to \emph{pre-serve} (i.e., serve before the user even place a request) the demand that can come in the next timeslot (which may not be present), or to wait until the next timeslot and serve it then if it indeed arrives. 
Depending on whether demand is served passively or proactively, the server receives different rewards representing user's satisfaction levels, or equivalently, different user  perception of system smartness (a system that can serve us before being asked is often considered smart). On the other hand, due to time-varying service conditions, the service actions incur different costs. 
The objective of the server is to design a control policy that achieves the \emph{system intelligence},  defined to be the maximum achievable reward rate subject to a constraint on average cost expenditure. 

This model models many examples in practice. For example, newsfeed pushing and video prefetching \cite{padmanabhan-96}, \cite{lee-2012}, instant searching \cite{google-instant-13}, and branch prediction in computer architecture \cite{ball-93}, \cite{farooq-2013}. It  captures two key elements of a smart system, i.e., being able to know what users want and pre-execute actions, and performing good resource management while doing so. 
Note that resource management here is critical. Indeed, one can always pre-serve all possible demands to impress users at the expense of very inefficient resource usage, but an intelligent system should do more than that. 

Solving this problem is non-trivial. First of all, rewards generated by actions now depend on their execution timing, i.e., before or after requests. Thus, 
this problem is different from typical network control problems, where outcomes of traffic serving actions  are independent of timing.  
Second, application demands are often correlated over time. Hence, algorithms must be able to handle this issue, and to efficiently explore such correlations. 
Third,  statistical information of the system dynamics are often unknown. Hence,  control algorithms must be able to quickly learn and utilize the information, and must be robust against errors introduced in learning.

 %
%
%

There have been many previous works studying optimal stochastic system control with resource management.  \cite{neelyenergy} designs algorithms for minimizing energy consumption of a stochastic network.  \cite{tan-downlink-09} studies the tradeoff between energy and robustness for downlink systems. \cite{neelysuperfast} and \cite{huangneely_dr_tac} develop algorithms for achieving the optimal utility-delay tradeoff in multihop networks. \cite{hou-delay-10} studies the problem of scheduling delay-constrained flows over wireless systems. However, all these works focus only on causal systems, i.e., service begins only after demand enters the system.  
Recent works \cite{tadrous-proactive}, \cite{xu-mm1}, \cite{zhang-predict-mm1-14}, \cite{xu-necessity-future-info-15} and \cite{huang-predictive-14} consider queueing system control with future traffic demand information. Works \cite{proactive-caching-uncertainties} and \cite{proactive-download-shaping-ton15} also consider similar problems where systems can proactively serve user demand. They obtain interesting results that characterize cost reduction under proactive service and the impact of number of users. 
However, system utilities in the aforementioned works are measured by  average metrics, e.g., throughput or outage probabilities, and actions taken at different times for serving traffic are considered equivalent.  Moreover, they do not investigate the impact of predictability and benefits of learning. 
%

We tackle the problem by first establishing an explicit characterization of the maximum achievable  intelligence level. The characterization provides a fundamental limit of system intelligence and reveals that it is jointly determined by system resource and control costs  (flexibility to pre-serve), user demand volume (opportunity to impress), and demand correlation (user predictability). 
Then, by carefully defining effective rewards and costs that represent action outcomes in pairs of slots, we propose an ideal control algorithm that assumes perfect system statistics, called  Budget-limited Intelligent System Control (\mtt{BISC}). 

We further develop Learning-aided \bisc{} (\lbisc), by incorporating a maximum-likelihood-estimator (\mle) for estimating  statistics, and a dual learning component  (\mtt{DL}) \cite{huang-learning-sig-14} for learning an empirical Lagrange multiplier that can be integrated into \bisc{}, to facilitate algorithm convergence and reduce resource deficit. We show that \lbisc{} achieves a system intelligence that can be pushed arbitrarily close to the highest value while ensuring a deterministic budget deficit bound. %
Furthermore, we investigate the \emph{user-population effect} on algorithm design in system intelligence, and rigorously quantify the degree to which the user population size can impact algorithm performance, i.e.,  algorithm convergence speed can be boosted by a factor that is proportional to the \emph{square-root} of the number of users. 
The analysis of \lbisc{} quantifies how  system  intelligence depends on the amount of system resource, action costs, number of data samples, and the control algorithm. To the best of our knowledge, we are the first to  propose a rigorous metric for quantifying system intelligence, and to jointly analyze the effects of different factors. 

The  contributions of this paper are summarized as follows. 
\begin{itemize}
\item \huang{We propose a mathematical model for investigating system intelligence. Our model captures three important components in smart systems including observation (data), learning and prediction (model training), and algorithm design (control).} 
%
\item \huang{We propose a novel metric for measuring system intelligence, and  explicitly characterize the optimal intelligence level. The characterization shows  that intelligence is jointly determined by system resource and action costs (flexibility to pre-serve), steady-state user demand volume (opportunity to impress),  and the degree of demand correlation (predictability, captured by demand \emph{transition rates}).} 

\item We propose an online learning-aided algorithm, called  Learning-aided Budget-limited Intelligent System Control (\mtt{LBISC}),  for achieving maximum system intelligence. Our algorithm consists of a maximum-likelihood-estimator (\mle) for learning system statistics, a dual-learning  component (\mtt{DL}) for learning a control-critical empirical Lagrange multiplier, and an online queue-based controller based on carefully defined effective action rewards and costs.  

\item We show that  \lbisc{} achieves an intelligence level that is within $O(N(T)^{-1/2}+\epsilon)$ of the maximum, where $T$ is the algorithm learning time, $N(T)$ is the number of data samples collected in learning and is proportional to the user population of  the system, and $\epsilon>0$ is an tunable parameter,  while guaranteeing   that the average  resource deficit is $O(\max( N(T)^{-\frac{1}{2}}/\epsilon, \log(1/\epsilon)^2))$. 
\item We  prove that  \lbisc{} achieves a  convergence time of $O(T+\max( N(T)^{-1/2}/\epsilon, \log(1/\epsilon)^2))$, which can be much smaller than the $\Theta(1/\epsilon)$ time for its non-learning counterpart. 
%
The performance results of \lbisc{} show that a company with more users has significant advantage over those with fewer, in that its algorithm convergence can be boosted by a factor that is proportional to the \emph{square-root} of the user population. 
\vspace{-.05in} 
\end{itemize}

The rest of the paper is organized as follows. In Section \ref{section:example}, we present a few examples of system smartness. We then present our general  model and problem formulation in Section \ref{section:model}, and characterize the optimal system intelligence in Section \ref{section:characterize}. Our algorithms are presented in Section \ref{section:algorithm}. 
Analysis is given in Section \ref{section:analysis}. 
Simulation results are presented in Section \ref{section:simulation}, followed by the conclusion in Section \ref{section:conclusion}.

\section{Examples}\label{section:example} 
In this section, we provide a few examples that will serve both as explanations and motivation for our general model. 

\textbf{Instant searching} \cite{google-instant-13}:  Imagine you are searching on a search engine. When you start typing, the search engine tries to guess whether you will type in a certain keyword and pre-computes  search results that it believes are relevant (predict and pre-service).  
If the server guesses correctly, results can be displayed immediately after typing is done, and  search latency will be significantly reduced, resulting in a great user experience  (high reward). If the prediction is inaccurate, the search engine can still process the query after getting the keyword, with the user being less impressed by the performance (low reward) and resources being wasted computing the wrong results (cost). 

\textbf{Video streaming} \cite{padmanabhan-96}: When a user is watching videos on Youtube or a smart mobile device, the server can predict whether the user wants a particular video clip, and  pre-load the video  to the user device (predict and pre-service). This way, if the prediction is correct,   user experience will be greatly improved and he will enjoy a large satisfaction (high reward). If the prediction is incorrect, the bandwidth and energy spent in pre-loading are wasted (cost), but the server can still stream the content video to on the fly, potentially with a degraded quality-of-service (low reward).





\textbf{Smart home} \cite{learning-auto-heating-ijcai13}: Consider a smart home  environment where a thermostat manages room temperatures in the house. Depending on its prediction about the behavior of  hosts, the thermostat can  pre-heat/pre-cool some of the rooms (predict and pre-service). If the host enters a room where temperature is already adjusted, he receives a high satisfaction (high reward). If the prediction is incorrect, the room temperature can still be adjusted, but may affect user experience (low reward). Moreover, the energy spent is wasted (cost). 

In all these examples, we see that the smart level of a system perceived by users is closely related to whether his demand is served proactively, and whether such predictive service is carried out without too much unnecessary resource expenditure. 
These factors will be made precise in our general given in the next section. 

\vspace{-.1in}
\section{System Model}\label{section:model}
We consider a system where a single server is serving a customer with $M$ applications (Fig. \ref{fig:system}). Here each application can represent,  e.g., a smartphone application, watching a particular video clip, or a certain type of computing task the customer executes regularly. 
We assume that the system operates in slotted time, i.e., $t\in\{0, 1, ...\}$.
\begin{figure}[ht]
\centering
\vspace{-.1in}
\includegraphics[height=1.8in, width=2.4in]{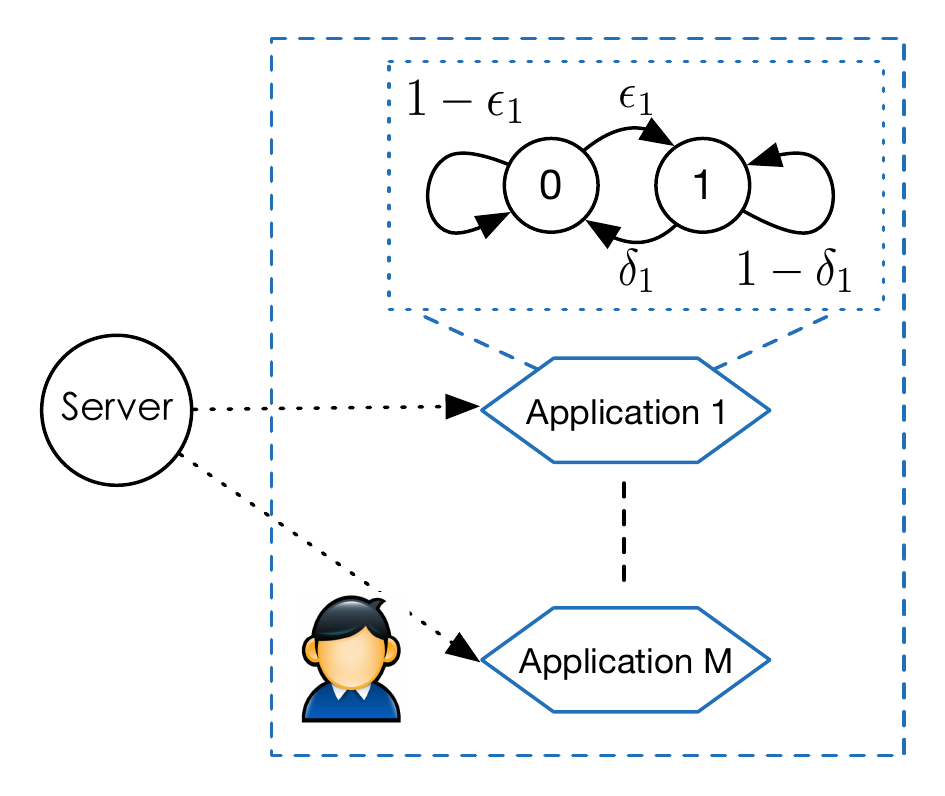}
\vspace{-.1in}
\caption{A multi-application system where a server serves a set of applications of a customer.}\label{fig:system}
\vspace{-.1in}
\end{figure}

\vspace{-.1in}
\subsection{The Demand Model}
We use $\bv{A}(t)=(A_m(t), m=1, ..., M)$ to denote the demand state of the customer at time $t$. We assume that $A_m(t)\in\{0, 1\}$, where $A_m(t)=1$ means there is a unit demand at time $t$ and $A_m(t)=0$ otherwise. 
For instance, if application $m$ represents a video clip watching task, $A_m(t)$ can denote whether the users wants to watch the video clip in the current slot. If so, the server needs to stream the video clip to the customer's device. 

We assume that for each application $m$, $A_m(t)$ evolves according to an independent  two-state Markov chain depicted in Fig. \ref{fig:system}, where the transition probabilities $\epsilon_m$ and $\delta_m$ are as shown in the figure.
Note that such an ON/OFF model has been commonly adopted for modeling network traffic states, e.g., \cite{self-similar-traffic-94} and \cite{dc-traffic-10}.\footnote{\huang{It is possible to adopt a more general multi-state Markov chain for each application.  
Our   results can also likely be extended to model systems where user behavior exhibits periodic patterns. In these scenarios,  Markov models can be built for user's behavior in different time periods and can be learned with data collected in those periods.}}  
We assume that $\epsilon_m$ and $\delta_m$ are unknown to the server, but the actual states can be observed every time slot.\footnote{This is due to the fact that the  states essentially denote whether or not the user has requested service from the server. Thus, by observing the user's response we can see the states.} 

%

\vspace{-.1in}
\subsection{The Service and Cost Model}
In every time slot $t$, the server serves  application $m$'s demand as follows. If $A_m(t)$ has yet been served in slot $t-1$, then it will be served in the current slot. Otherwise the current demand is considered completed. Then, in addition to serving the current demand, the server can also try to \emph{pre-serve} the demand in time $t+1$. 
We denote $\mu_{mc}(t)\in\{0, 1\}$ the action taken to serve the current demand and $\mu_{mp}(t)\in\{0, 1\}$ to denote the action taken to serve the demand in time $t+1$.\footnote{\huang{Our results can be extended to having $\mu_{mp}(t)\in[0, 1]$, in which case partial pre-service is allowed.}} 
It can be seen that:  
\begin{eqnarray}\label{eq:mu-mc-mp}
\mu_{mc}(t) = \max[A_{m}(t)-\mu_{mp}(t-1), 0]. 
\end{eqnarray}
That is,  demand will be fulfilled in the same time slot. Since $\mu_{mc}(t)$ is completely determined by $\mu_{mp}(t-1)$ and $A_m(t)$, we define $\bv{\mu}(t) \triangleq (\mu_{mp}(t), \,\forall\, m)$ for notation simplicity and   view $\bv{\mu}(t)$ as the only control action at   time $t$. 

We assume that each service to application $m$, either proactive or passive, consumes resource of the server. This can be due to energy expenditure or bandwidth consumption.  To capture the fact that the condition under which actions are taken can be time-varying, we denote $S_{m}(t)$ the resource state for application $m$ at time $t$, which affects how much resource is needed for service, e.g., channel condition of a wireless link, or cost spent for getting a particular video clip from an external server. 
We denote $\bv{S}(t)=(S_1(t), ..., S_M(t))$  the overall system resource state, and assume that $\bv{S}(t)\in\script{S}=\{s_1, ..., s_K\}$ with $\pi_k=\prob{\bv{S}(t) = s_k}$ and is i.i.d. every slot (also independent of $\bv{A}(t)$). 
Here we assume that the server can observe the instantaneous state $\bv{S}(t)$ and the $\pi_k$ values are known.\footnote{This assumption is made to allow us to focus on the user demand part. It  is also not restrictive, as $\bv{S}(t)$ is a non-human parameter and can often be learned from observations serving different applications, whereas $\bv{A}(t)$ is more personalized and targeted learning is needed. Nonetheless, our method  also applies to the case when $\{\pi_k, k=1, ..., K\}$ is unknown.} To model the fact that   a given resource state $s_k$ typically constrains the set of feasible actions, we denote $\script{U}_{\bv{S}(t)}$ the set of feasible actions under $\bv{S}(t)$. 
Examples of $\script{U}_{\bv{S}(t)}$ include 
$\script{U}_{s_k} = \{0/1\}^M$ for the unconstrained case, or $\script{U}_{s_k} = \{ \bv{\mu}\in\{0/1\}^M: \sum_{m}\mu_{mp} \leq N_k  \}$ when we are allowed to pre-serve only $N_k$ applications. 
We assume that if $\bv{\mu}\in\script{U}_{s_k}$, then a vector obtained by setting any entry to zero in $\bv{\mu}$ remains in $\script{U}_{s_k}$. 
 
Under the resource state, the instantaneous cost incurred to the server is given by $C(t) =  \sum_mC_m(t)$, where 
\begin{eqnarray}
C_m(t)\triangleq C_m( \mu_{mc}(t), \bv{S}(t) ) + C_m(\mu_{mp}(t), \bv{S}(t) ) \label{eq:cost-def}. 
\end{eqnarray}
%
With (\ref{eq:cost-def}), we assume that the cost in each slot is linear in $\bv{\mu}(t)$, a model that fits situations where costs for serving applications are additive, e.g., amount of bandwidth required for streaming videos. 
We assume that $C_m(\cdot, s_k)=0$ is continuous, $C_m(0, s_k)=0$ and $C_m(1, s_k)\leq C_{\max}$ for some $C_{\max}<\infty$ for all $k$ and $m$. 
We define 
\begin{eqnarray}
C_{\text{av}}\triangleq \limsup_{t\rightarrow\infty}\frac{1}{t}\sum_{\tau=0}^{t-1}\expect{C(\tau)} \label{eq:avg-cost-def}
\end{eqnarray}
as the average cost spent serving the demand. For notation simplicity, we also denote $\overline{C}_{m}\triangleq\sum_k\pi_kC_m(1, s_k)$ as the expected cost for serving one unit demand. 
 
\vspace{-.1in}
\subsection{The Reward Model} 
In each time slot, serving each application demand generates a reward to the server. We use $r_m(t)$ to denote the reward collected in time $t$ from application $m$. It takes the following  values:  
\begin{eqnarray}\label{eq:reward-def}
r_m(t) = \left\{\begin{array}{cc}
0 & A_m(t)=0\\
r_{mc} & A_m(t)=1\,\,\&\,\,\mu_{mp}(t-1)=0 \\
r_{mp} & A_m(t)=1\,\,\&\,\,\mu_{mp}(t-1)=1
\end{array}\right. 
\end{eqnarray}
By varying the values of $r_{mp}$ and $r_{mc}$, we can model different sensitivity levels of the user to pre-service. We assume that  $r_{mp}\geq r_{mc}$ are both known to the server.\footnote{This can be done by monitoring  user feedbacks, e.g., display a short message and ask the user to provide instantaneous feedback. In the case when they are not known a-priori, they can be learned via a similar procedure as in the \lbisc{} algorithm presented later.} This is natural for capturing the fact that a user typically gets more satisfaction if his demand is pre-served. 
We denote $r_{md} = r_{mp}-r_{mc}$ and denote $r_d\triangleq\max_m(r_{mp}-r_{mc})$  the maximum reward difference between pre-service and normal serving. 


To evaluate the performance of the server's control policy, we define the following average reward rate, i.e., 
\begin{eqnarray}
r_{\text{av}} = \liminf_{t\rightarrow\infty}\frac{1}{t}\sum_{\tau=0}^{t-1}\sum_m\expect{r_m(\tau)}.\label{eq:reward-rate} 
\end{eqnarray}
$r_{\text{av}}$ is a natural index of system smartness. A higher value of $r_{\text{av}}$ implies that the server can better predict what the user needs and pre-serves him. As a result, the user perceives a smarter system. 
In the special case when $r_{mp}=1$ and $r_{mc}=0$, the average reward  captures the rate of correct prediction. 
%


\vspace{-.1in}
\subsection{System Objective}
In every time slot, the server accumulates observations about applications, and tries to learn user preferences and to choose proper actions. 
We define $\Gamma$ the set of feasible control algorithms, i.e., algorithms that only choose feasible control actions $\bv{\mu}(t)\in\script{U}_{\bv{S}(t)}$ in every time slot, possibly with help from external information sources regarding application demand statistics. For each policy $\Pi\in\Gamma$, we denote $I(\Pi, \rho)=r_{\text{av}}(\Pi)$ and $C_{\text{av}}(\Pi)$ the resulting algorithm intelligence and average cost rate, respectively.


The objective of the system is to achieve the  {\emph{system intelligence} $I(\rho)$, defined to be the maximum reward rate $r_{\text{av}}$ achievable over all feasible policies, 
subject to a constraint $\rho\in(0, \rho_{\max}]$ on the rate of cost expenditure, i.e.,  
\begin{eqnarray}
\hspace{-.4in} && I(\rho)\,\triangleq\, \max:\quad  I(\Pi, \rho) \label{eq:obj-reward}\\
\hspace{-.4in}&&\qquad\qquad\,\, \text{s.t.}\quad \,\, C_{\text{av}}(\Pi)\leq\rho \label{eq:cond-cost}\\
\hspace{-.4in}  &&\qquad\qquad\qquad\quad \Pi\in\Gamma. \nonumber
\end{eqnarray}
Here  $\rho_{\max}\triangleq \sum_k\pi_k\sum_m\sum C_m(1, s_k)$ is the maximum budget needed to achieve the highest level of intelligence.\footnote{This corresponds to the case of always pre-serving user demand. 
Despite a poor resource utilization, all demands will be pre-served and $I(\rho_{\max})= \sum_m\frac{\epsilon_mr_{mp}}{\epsilon_m+\delta_m}$ where $\frac{\epsilon_m}{\epsilon_m+\delta_m}$ is the steady-state distribution of having $A_m(t)=1$.} 
%

 %

\vspace{-.1in}
\subsection{Discussions of the Model}
\huang{In our model, we have assumed that user demand must be served within the same slot it is placed. This is a suitable model for many task management systems where jobs are time-sensitive, e.g., newsfeed pushing, realtime computation, elevator scheduling, video streaming and searching. 
In these problems, a user's perception about system smartness is often based on whether  jobs are pre-served correctly.}  

Our model captures key ingredients of a general intelligent system including observations, learning and prediction, and control. Indeed, general monitoring and sensing methods can be integrated into the observation part, various learning methods can be incorporated into our learning-prediction step, and control algorithms can be combined with or replace our control scheme presented later. 
%


%

\section{Characterizing intelligence}\label{section:characterize}
In this section, we first obtain a characterization of $I(\rho)$. This result provides interesting insight into system intelligence, and provide a useful criteria  for evaluating smartness of control algorithms. 

To this end,  denote $z(t) = (\bv{A}(t), \bv{S}(t))$ and $\script{Z}=\{z_1, ..., z_H\}$ the state space of $z(t)$, and denote  $\pi_h$ the steady-state distribution of $z_h$. Furthermore,  define for notation simplicity the transition probability $a^{(i_h)}_m$ as: 
\begin{eqnarray}\label{eq:effective-reward-ce}
a^{(i_h)}_m  = \left\{\begin{array}{cc}
1-\delta_m  & \text{when}\quad i_h=A_{m}^{(h)}=1 \\
\epsilon_m  &  \text{when}\quad i_h=A_{m}^{(h)}=0 
\end{array}\right. 
\end{eqnarray}
That is, $a^{(i)}_m$ denotes the probability of having $A_m(t+1)=1$ in the next time slot given the current state. 
Then, our theorem is as follows. 
\begin{theorem}\label{theorem:upper-bdd}
$I(\rho)$ is equal to the optimal value  of the following optimization problem: 
\begin{eqnarray}
\hspace{-.35in}&&\max: \,\,\, \sum_h\pi_h\sum_{j=1}^3\theta^{(h)}_j\sum_m a^{(i_h)}_m [\mu^{(h)}_{mpj}r_{mp}  + (1- \mu^{(h)}_{mpj})r_{mc}] \label{eq:bdd-obj}\\
\hspace{-.35in}&&\quad\text{s.t.} \quad  \sum_h\pi_h\sum_{j=1}^3\theta^{(h)}_j\sum_m [C_m(\mu^{(h)}_{mpj}, z_h)  \\
\hspace{-.35in}&&\qquad\qquad\qquad\qquad\qquad\qquad\qquad   +(1-\mu^{(h)}_{mpj}) a^{(i_h)}_m\overline{C}_m]\leq\rho \nonumber\\
 \hspace{-.35in}&&\qquad \quad\,\,\,  \theta^{(h)}_j\geq0, \,\sum_j\theta^{(h)}_j=1,\,\forall\,z_h, j\nonumber\\
 \hspace{-.35in}&&\qquad \quad\,\,\,  \bv{\mu}_j^{(h)}\in\script{U}_{z_h},\,\forall\,z_h, j. \nonumber
\end{eqnarray}
Here $\theta^{(h)}_{j}$ represents the probability of adopting the pre-service vector $\bv{\mu}_j^{(h)}$ under state $z_h$. 
\end{theorem}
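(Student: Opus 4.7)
The plan is to cast (\ref{eq:obj-reward})--(\ref{eq:cond-cost}) as a constrained average-reward MDP on the compound state $z(t)=(\bv{A}(t),\bv{S}(t))$ with action $\bv{\mu}(t)\in\script{U}_{\bv{S}(t)}$, and to bundle the two-slot coupling induced by (\ref{eq:mu-mc-mp}) into effective per-state rewards and costs so that standard stationary randomized policy arguments apply. The target is to show the LP value is simultaneously an upper bound and is achieved by a randomized stationary policy whose support in each state has size at most three.

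First I would attribute both the reward $r_m(t{+}1)$ and the passive-service cost $C_m(\mu_{mc}(t{+}1),\bv{S}(t{+}1))$ back to the pre-service decision $\mu_{mp}(t)$ taken at state $z_h$. Using (\ref{eq:mu-mc-mp}), the transition definition (\ref{eq:effective-reward-ce}), and the independence of $\bv{S}(t{+}1)$ from $\bv{A}$, a direct computation gives
\begin{align}
\expect{r_m(t{+}1)\mid z(t)=z_h,\mu_{mp}(t)=\mu}
 &= a_m^{(i_h)}\bigl[\mu\, r_{mp}+(1-\mu)r_{mc}\bigr],\\
\expect{C_m(\mu_{mc}(t{+}1),\bv{S}(t{+}1))\mid z(t)=z_h,\mu_{mp}(t)=\mu}
 &= (1-\mu)\,a_m^{(i_h)}\overline{C}_m,
\end{align}
which, added to the immediate pre-service cost $C_m(\mu,z_h)$, reproduce exactly the per-state contributions in (\ref{eq:bdd-obj}). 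Because each $A_m$-chain is irreducible and $\bv{S}$ is i.i.d., $(z(t),\bv{\mu}(t))$ is ergodic under any stationary randomized policy, so the ergodic theorem identifies $r_{\text{av}}$ and $C_{\text{av}}$ with expectations of these effective rewards and costs under the joint stationary distribution $\pi_h\,\theta^{(h)}_j$.

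Achievability then follows by construction: given any LP-feasible $(\theta_j^{(h)},\bv{\mu}_j^{(h)})$, deploy the stationary randomized policy that upon observing $z_h$ samples $\bv{\mu}_j^{(h)}$ with probability $\theta_j^{(h)}$; the effective reward/cost identities imply this policy attains the LP objective while respecting (\ref{eq:cond-cost}). That the support size three suffices is justified by Carath\'eodory's theorem: for each $z_h$, the set of (effective reward, effective cost) pairs achievable by randomizing over $\script{U}_{z_h}$ lies in a two-dimensional plane, so any extreme achievable point is a convex combination of at most three pure actions drawn from $\script{U}_{z_h}$.

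For the converse, given an arbitrary $\Pi\in\Gamma$, I would examine the empirical state--action frequencies $\hat f_T(z_h,\bv{\mu})=\frac{1}{T}\sum_{t=0}^{T-1}\mathbf{1}\{z(t)=z_h,\bv{\mu}(t)=\bv{\mu}\}$; tightness plus a subsequence argument, combined with the ergodic structure of $z(t)$, produces a limiting distribution of the form $\pi_h\,\theta_j^{(h)}$ on $\script{Z}\times\script{U}_{z_h}$ that is LP-feasible and whose LP objective upper-bounds $r_{\text{av}}(\Pi)$. I anticipate the main obstacle to be the intertemporal bookkeeping: because $r_m(t)$ and $C_m(\mu_{mc}(t),\bv{S}(t))$ are jointly shaped by decisions taken at two different slots, naive slot-by-slot summation would conflate contributions from neighbouring state--action pairs, and the careful work lies in verifying that the boundary telescoping terms vanish in the long-run average while each passive-service reward and cost is attributed exactly once to the correct pre-service decision.
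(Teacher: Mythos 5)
Your proof is correct, and the converse half follows essentially the same route as the paper: you attribute the slot-$(t{+}1)$ reward and passive-service cost back to the pre-service decision taken at $z_h$ (your two conditional-expectation identities are exactly the paper's per-state $(Reward^{(h)}(T), Cost^{(h)}(T))$ pair), invoke Carath\'eodory's theorem in $\mathbb{R}^2$ to compress the per-state randomization to three actions, and pass to a subsequential limit of the empirical averages. The genuine difference is in the achievability half: the paper does not prove $I(\rho)\geq\Phi$ inside Appendix A at all, but defers it to the performance analysis of \lbisc{}, which is shown to attain an intelligence level within $O(1/V)$ of $\Phi$; you instead give a direct, self-contained construction via the stationary randomized policy that samples $\bv{\mu}_j^{(h)}$ with probability $\theta_j^{(h)}$ in state $z_h$, and appeal to ergodicity of the exogenous chain $z(t)$ (which is legitimate here precisely because the state dynamics do not depend on the actions). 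Your route yields a cleaner, algorithm-independent proof of the characterization; the paper's route avoids a separate achievability argument but makes Theorem~\ref{theorem:upper-bdd} logically dependent on Theorem~\ref{theorem:lbisc-int-budget}. Two small points worth tightening: your Carath\'eodory sentence should say that \emph{any} point of the two-dimensional convex hull (not just extreme points, which need only one or two atoms) is a combination of at most three pure actions; and in the converse you should pick a subsequence along which both the reward and cost time-averages converge, so that the limit inherits objective value at least $r_{\text{av}}$ (a $\liminf$) while still satisfying the cost constraint (a $\limsup$).
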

\begin{proof}  
See Appendix A. 
\end{proof}

\huang{Theorem \ref{theorem:upper-bdd} states that no matter what learning and prediction methods are adopted and how the system is controlled, the intelligence level perceived by users will not exceed the value  in (\ref{eq:bdd-obj}). 
This is a powerful result and provides a \emph{fundamental limit} about the system intelligence.  
Theorem \ref{theorem:upper-bdd} also reveals some interesting facts and  rigorously  justifies  various common beliefs about system intelligence. (i) When user demands are more predictable (captured by transition rates $\epsilon_m$ and $\delta_m$, represented by $a^{(i_h)}_m$ in (\ref{eq:bdd-obj})),  the system can achieve a higher  intelligence level.  
(ii) A system with more resources (larger $\rho$) or better cost management (smaller $C_m$ functions) can likely achieve a higher level of perceived smartness. (iii) When there is more demand from users (captured by  distribution $\pi_h$), there are more opportunities for the system to impress the user, and to increase the perceived smartness level.}  
The inclusion of transition rates in the theorem shows that our problem can be  very different from existing network optimization problems, e.g., \cite{ying_wmshortest_infocom09} and \cite{neelyfairness}, where typically steady-state distributions matter most. 

As a concrete example, Fig. \ref{fig:example} shows the $I(\rho)$ values for a single-application  system under (i) $\epsilon=\delta$ and (ii) $\delta=0.6$. 
We see that in the symmetric case, where the steady-state distribution is always $(0.5, 0.5)$, $I(\rho)$ is inverse-proportional to the entropy rate of the demand Markov chain, which  is  consistent with our finding that a higher intelligence level is achievable for more predictable systems (lower entropy). 
For the $\delta=0.6$ case, $I(\rho)$ first increases, then decreases, and then increases again. The reason is as follows. At the beginning, as $\epsilon$ increases, demand increases. Then, when $\epsilon\in[0.4,0.7]$, $I(\rho)$ is reduced by either the increasing randomness (less predictable) or decreased demand (less opportunity). After that, predictability increases and $I(\rho)$ increases again. This  shows that $I(\rho)$ is jointly determined by the steady-state distribution and the transition rates.  
\begin{figure}[ht] 
\centering
\vspace{-.1in}
\includegraphics[height=1.8in, width=3.3in]{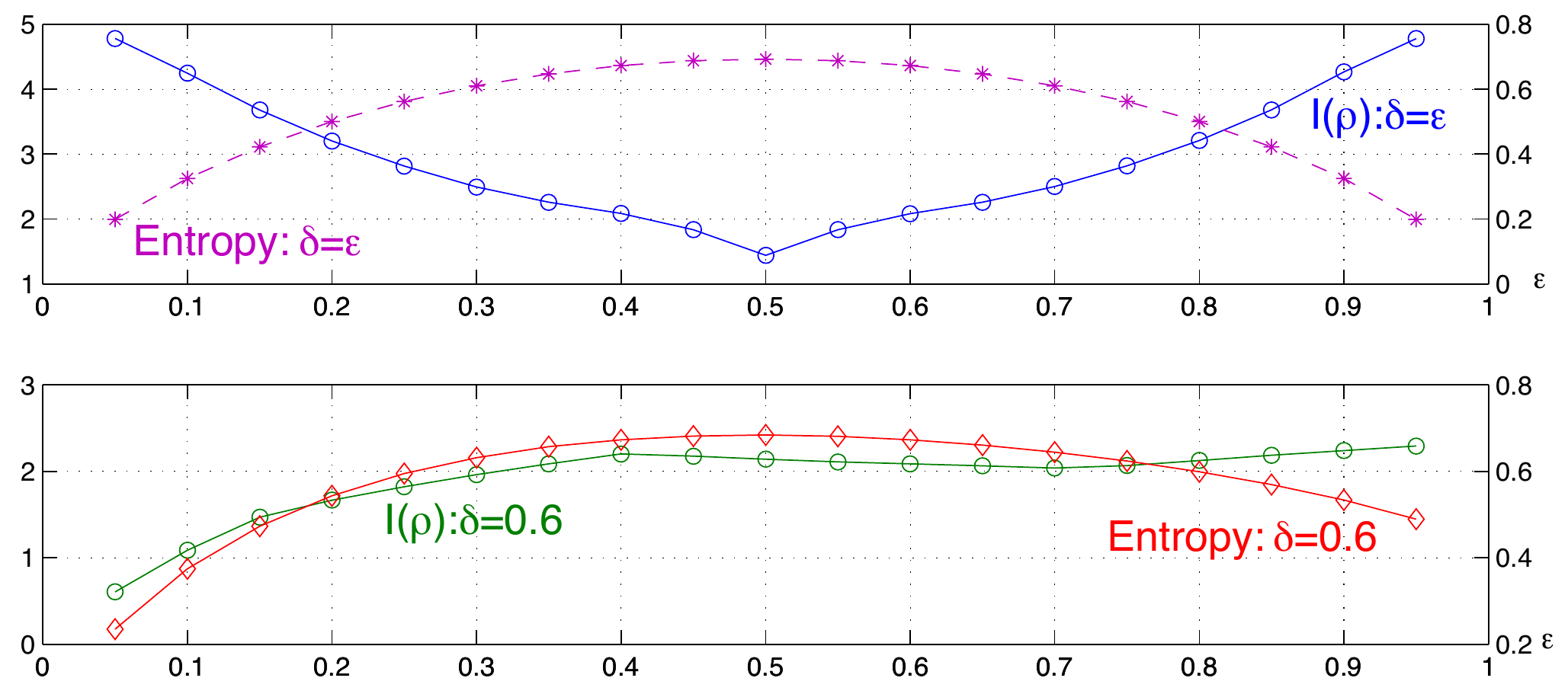}
\vspace{-.05in}
\caption{System intelligence ($M=1$): The left y-axis is for $I(\rho)$  and the right y-axis is for the entropy rate. The x-axis shows the value of $\epsilon$. 
We  see that $I(\rho)$ is consistent with our finding that one can achieve higher intelligence levels for more predictable systems.} 
\label{fig:example}
\vspace{-.1in}
\end{figure}

Note that solving problem (\ref{eq:bdd-obj}) is non-trivial due to the need of system statistics and the potentially complicated structure of $\script{U}_{z_h}$. Thus, in the next section, we propose a learning-based  algorithm for solving the optimal control problem. 
For our algorithm design and analysis, we  define the following  modified dual function for (\ref{eq:bdd-obj}), where $V\geq1$ is a control parameter introduced for later use:\footnote{\huang{Notice that although (\ref{eq:dual}) does not include the $\theta^{(h)}_j$ variables, it can be shown to be equivalent. Moreover, (\ref{eq:dual}) is sufficient for our algorithm design and analysis.}  }
\begin{eqnarray}
\hspace{-.2in}&& g_{\bv{\pi}}(\gamma) \triangleq \sum_h \pi_h\sup_{\bv{\mu}^{(h)}_p}\sum_m\bigg\{ Va^{(i_h)}_m[\mu^{(h)}_{mp}r_{mp}  + (1- \mu^{(h)}_{mp})r_{mc}] \nonumber \\
\hspace{-.2in}&&\qquad\qquad - \gamma  [C_m(\mu^{(h)}_{mp}, z_h)      +(1-\mu^{(h)}_{mp}) a^{(i_h)}_m\overline{C}_m - \rho]\bigg\}. \label{eq:dual}
\end{eqnarray}
Here we use the subscript $\bv{\pi}$ to denote that the dual function is defined with distribution $\bv{\pi}$. 
We also use $\gamma^*$ to denote the minimizer of the dual function. It is shown in \cite{huangneely_qlamarkovian} that: 
\begin{eqnarray}
g_{\bv{\pi}}(\gamma^*)  \geq V\times I(\rho). \label{eq:dual-primal}
\end{eqnarray}
We also define the dual function for each state $z_h$ as follows: 
\begin{eqnarray}
\hspace{-.3in}&& g_{h}(\gamma) \triangleq \sup_{\bv{\mu}^{(h)}_p}\sum_m\bigg\{ Va^{(i_h)}_m[\mu^{(h)}_{mp}r_{mp}  + (1- \mu^{(h)}_{mp})r_{mc}] \label{eq:dual-state}   \\
\hspace{-.3in}&&\qquad\qquad - \gamma  [C_m(\mu^{(h)}_{mp}, z_h)      +(1-\mu^{(h)}_{mp}) a^{(i_h)}_m\overline{C}_m - \rho]\bigg\}. \nonumber
\end{eqnarray} 
It can be  seen that $g_{\bv{\pi}}(\gamma)=\sum_h\pi_hg_{h}(\gamma)$.
%

 


\section{Algorithm Design}\label{section:algorithm}
In this section, we present an online learning-aided control algorithm for achieving the maximum system intelligence. To facilitate understanding, we first present an ideal algorithm that assumes full information of $\epsilon_m$, $\delta_m$, and $\pi_h$. It serves as a building block for our actual algorithm. 
%

\subsection{An Ideal Algorithm}

To do so, we first define the \emph{effective} reward and cost for each application $m$ as functions of $\bv{A}(t)$ and $\bv{\mu}(t)$. Specifically, when $A_m(t)=i$, we have: 
: 
\begin{eqnarray}\label{eq:effective-reward-def}
\tilde{r}^{(i)}_m(\mu_{mp}(t)) = \left\{\begin{array}{cc}
a^{(i)}_mr_{mp}  & \text{if}\quad\mu_{mp}(t)=1 \\ 
a^{(i)}_mr_{mc} &  \text{if}\quad\mu_{mp}(t)=0 
\end{array}\right. 
\end{eqnarray}
Here $a^{(i)}_m$ is defined in (\ref{eq:effective-reward-ce}) as the probability to get into state $A_m(t+1)=1$ conditioning on the current state being $i$. 
To understand the definition, we see that when $A_m(t)=i$, by taking $\mu_{mp}(t)=0$, the server decides not  to pre-serve the potential future demand at $A_m(t+1)$. Hence, if there is demand in slot $t+1$ (happens with probability $a^{(i)}_m$), it  will be served by $\mu_{mc}(t+1)$, resulting in a reward of $r_{mc}$. 
On the other hand, if $\mu_{mp}(t)=1$, with probability $a^{(i)}_m$, the future demand will be pre-served and a reward $r_{mp}$ can be collected. 
It is important to note that the effective reward is defined to be the reward collected in slot $t+1$ as result of actions at time $t$. We denote $\tilde{r}(\bv{\mu}(t)) \triangleq \sum_m\tilde{r}^{(A_m(t))}_m(\mu_{mp}(t))$. 

Similarly, we define the \emph{effective} cost as a function of $\bv{\mu}(t)$ for $A_m(t)=i$:  
\begin{eqnarray}\label{eq:effective-cost-def-1}
\tilde{C}^{(i)}_m(\mu_{mp}(t))  = \left\{\begin{array}{cc}
C_m(1, \bv{S}(t))  &  \text{if}\quad\mu_{mp}(t)=1 \\
a^{(i)}_m\sum_k \pi_kC_m(1, s_k) &  \text{if}\quad\mu_{mp}(t)=0 
\end{array}\right. 
\end{eqnarray}
Note that $\tilde{C}_m(\mu_{mp}(t))$ is the expected cost spent in slots $t$ and $t+1$. As in the effective reward case, we denote $\tilde{C}(\bv{\mu}(t)) \triangleq \sum_m\tilde{C}^{(A_m(t))}_m(\mu_{mp}(t))$. 

With the above definitions, we  introduce a \emph{deficit} queue $d(t)$ that evolves as follows: 
\begin{eqnarray}
d(t+1) = \max[d(t) + \tilde{C}(t) - \rho, 0], \label{eq:deficit-def}
\end{eqnarray}
with $d(0)=0$. 
Then, we define a Lyapunov function $L(t)\triangleq\frac{1}{2}d^2(t)$ and define a single-slot sample-path drift $\Delta(t)\triangleq L(t+1)-L(t)$. 
By squaring both sides of  (\ref{eq:deficit-def}),  using $(\max[x, 0])^2\leq x^2$ for all $x\in\mathbb{R}$, and  $\tilde{C}(\bv{\mu}(t))\leq MC_{\max}$, we obtain the following inequality: 
\begin{eqnarray}
\Delta(t) \leq B - d(t) [\rho-\tilde{C}(t)]. \label{eq:drift-ineq}
\end{eqnarray}
Here $B\triangleq \rho_{\max}^2+M^2C_{\max}^2$. 
Adding to both sides the term $V\sum_m\tilde{r}_m(\mu_{mp}(t))$, where $V\geq1$ is a control parameter, we obtain: 
\begin{eqnarray}\label{eq:drift-ineq-1}
\Delta(t) - V\tilde{r}(\bv{\mu}(t))  \leq B - \bigg(V\tilde{r}(\bv{\mu}(t)) + d(t) [\rho-\tilde{C}(\bv{\mu}(t)) ]\bigg). 
\end{eqnarray} 
Having established (\ref{eq:drift-ineq-1}), we construct the following ideal algorithm by choosing pre-service actions to minimize the right-hand-side of the drift. 

\underline{\textsf{Budget-limited Intelligent System Control} (\mtt{BISC})}: At every time $t$, observe $\bv{A}(t)$, $\bv{S}(t)$ and $d(t)$. Do: 
\begin{achievements}
\item For each $m$, define the cost-differential as follows: 
\begin{eqnarray}
D_m(t) \triangleq C_m(1, \bv{S}(t)) - a^{(i)}_m\overline{C}_m. \label{eq:dm-def}
\end{eqnarray}
Here $i=A_m(t)$. 
Then, solve the following problem to find the optimal pre-serving action $\bv{\mu}(t)$: 
\begin{eqnarray}
\hspace{-.7in}&& \max:\,\,  \sum_{m} \mu_{mp}(t)[Va_m^{(i)} (r_{mp} - r_{mc}) -  d(t) D_m(t)] \label{eq:action-opt}\\
\hspace{-.7in}&&\quad\text{s.t.} \,\, \bv{\mu}(t) \in\script{U}_{\bv{S}(t)}. \nonumber
\end{eqnarray}
\item Update $d(t)$ according to (\ref{eq:deficit-def}). $\Diamond$
\end{achievements} 

A few remarks are in place. (i) The value $a_m^{(i)} (r_{mp} - r_{mc})$ can be viewed as the expected reward loss if we choose $\mu_{mp}(t)=0$ and the value $D_m(t)$ is the expected cost saving for doing so. The parameter $V$ and $d(t)$ thus provide proper weights to the terms for striking a balance between them. 
If the weighted cost saving does not overweight the weighted reward loss, it  is more desirable to pre-serve the demand in the current slot. 
(ii) For applications where $a_m^{(i)} (r_{mp} - r_{mc})$ is smaller, it is less desirable to pre-serve the demand, as the user perception of system intelligence may not be heavily affected. 
(iii) In the special case when $\rho\geq\rho_{\max}$, we see that $d(t)$ will always stay near zero, resulting in $\mu_{mp}(t)=1$  throughout. 
(iv) \mtt{BISC} is easy to implement. Since each $\mu_{mp}(t)$ is either $0$ or $1$, problem (\ref{eq:action-opt}) is indeed finding the maximum-weighted vector from $\script{U}_{\bv{S}(t)}$. In the case when $\script{U}_{\bv{S}(t)}$ only limits the number of non-zero entries, we can easily sort the applications according to the value $Va_m^{(i)} (r_{mp} - r_{mc}) -  d(t) D_m(t)$ and choose the top ones.

\subsection{Learning-aided Algorithm with User Population Effect}\label{subsection:algorithm-general}
In this section, we present an algorithm that  learns $\delta_m$ and $\epsilon_m$   online and performs optimal control simultaneously. 
We also explicitly describe how the system user population size can affect algorithm performance. 

To rigorously quantify this user effect, we first introduce the following \emph{user-population effect} function $N(T)$. 
\begin{definition} 
A system is said to have a user population effect $N(T)$ if within $T$ slots, (i) it collects a sequence of demand samples $\{\bv{A}(0), ..., \bv{A}(N(T)-1)\}$ generated by the Markov process $\bv{A}(t)$, and (ii) $N(T)\geq T$ for all $T$. $\Diamond$
\end{definition}

$N(t)$ captures the number of useful user data samples a system can collect in $T$ time slots, and is a natural indicator about how  user population contributes to learning user  preferences. 
For instance, if there is only one user,   $N(T)=T$. 
On the other hand, if a system has many users,  it can often collect samples from similar users (often determined via machine learning techniques, e.g., clustering) to study a target user's preferences. In this case, one typical example for $N(T)$ can be: 
\begin{eqnarray}
N(T) = f(\#\, \text{of user})\cdot T,  \label{eq:user-population}
\end{eqnarray}
where $f(\#\, \text{of user})$ computes the number of similar users that generate useful samples.

%
%

%
 
Now we  present our algorithm. We begin with  the first component, which is a maximum likelihood estimator (\mtt{MLE}) \cite{walrand-prob-book-14} for estimating the statistics.\footnote{Any alternative estimator that possesses similar features as \mtt{MLE} can also be used here.} 

\underline{\textsf{Maximum Likelihood Estimator} (\mtt{MLE}($T$))}: Fix a learning time $T$ and obtain a sequence of samples $\{\bv{A}(0), ..., \bv{A}(N(T)-1)\}$ in $[0, T-1]$. Output:\footnote{\huang{Here we adopt \mtt{MLE} to demonstrate how learning can be rigorously and efficiently combined with control algorithms to achieve good performance.}} 
\begin{eqnarray}
\hat{\epsilon}_m(T) &=& \frac{\sum_{t=0}^{N(T)-1}1_{\{A_m(t)=0, A_m(t+1)=1\}} }{ \sum_{t=0}^{T-1}1_{\{A_m(t)=0\}} } \label{eq:est-epsilon}\\
\hat{\delta}_m(T) &=& \frac{\sum_{t=0}^{N(T)-1}1_{\{A_m(t)=1, A_m(t+1)=0\}} }{ \sum_{t=0}^{T-1}1_{\{A_m(t)=1\}} }. \label{eq:est-delta}
\end{eqnarray}
That is,   use empirical frequencies to estimate the transition probabilities. $\Diamond$

Note that after estimating $\hat{\bv{\epsilon}}$ and $\hat{\bv{\delta}}$, we also obtain an estimation of $\hat{\bv{\pi}}$. 
We now have the second component, which is a \emph{dual learning} module \cite{huang-learning-sig-14} that learns an empirical Lagrange multiplier based on $\hat{\bv{\epsilon}}$ and $\hat{\bv{\delta}}$, and $\hat{\bv{\pi}}$. 

\underline{\textsf{Dual Learning} (\mtt{DL}($\hat{\bv{\epsilon}}$, $\hat{\bv{\delta}}$, $\hat{\bv{\pi}}$))}: Construct $\hat{g}_{\hat{\bv{\pi}}}(\gamma)$ with $\hat{\bv{\epsilon}}$, $\hat{\bv{\delta}}$, and $\hat{\bv{\pi}}$ according to (\ref{eq:dual}). Solve the following problem and outputs the optimal solution $\gamma^*_T$. 
\begin{eqnarray}
\min: \hat{g}_{\hat{\bv{\pi}}}(\gamma), \,\,\text{s.t.}\,\, \gamma\geq0.\,\,\Diamond \label{eq:dual-learning}
\end{eqnarray}
Here $\hat{g}_{\hat{\bv{\pi}}}(\gamma)$ is the dual function with true statistics being replaced by $\hat{\bv{\epsilon}}$, $\hat{\bv{\delta}}$, and $\hat{\bv{\pi}}$. 
With \mle{}($T$) and \mtt{DL}($\hat{\bv{\epsilon}}$, $\hat{\bv{\delta}}$, $\hat{\bv{\pi}}$), we have our learning-aided \bisc{} algorithm.\footnote{The methodology can be applied to the case when $r_{mp}$ and $r_{mc}$ are also unknown.}

\underline{\textsf{Learning-aided BISC} (\mtt{LBISC}($T$, $\theta$))}: Fix a learning time  $T$ and perform the following:\footnote{The main reason to adopt a finite $T$ is for tractability. In actual implementation, one can continuously refine the estimates for $\hat{\bv{\epsilon}}$, $\hat{\bv{\delta}}$, and $\hat{\bv{\pi}}$ over time.} 
\begin{achievements} 
\item (Estimation) For $t=0, ..., T-1$,  choose $\bv{\mu}_p(t)=\bv{1}$, i.e., always pre-serve user demand.  At time $T$, perform \mtt{MLE}($T$) to obtain $\hat{\bv{\epsilon}}$ and $\hat{\bv{\delta}}$, and $\hat{\bv{\pi}}$.
\item (Learning) At time $T$, apply  \mtt{DL}($\hat{\bv{\epsilon}}$, $\hat{\bv{\delta}}$, $\hat{\bv{\pi}}$) and compute $\gamma^*_T$. If  $\gamma^*_T=\infty$, set $\gamma^*_T=V\log(V)$. Reset $d(T)=0$. 
\item (Control) For $t\geq T$, run \mtt{BISC} with $\hat{\bv{\pi}}$, $\hat{\bv{\epsilon}}$ and $\hat{\bv{\delta}}$, and with effective queue size $\tilde{d}(t) = d(t) + (\gamma^*_T - \theta)^+$. $\Diamond$
\end{achievements} 

Here $\theta$ (to be specified) is a tuning parameter introduced to compensate for the error in  $\gamma^*_T$ (with respect to $\gamma^*$). 
It is interesting to note that \lbisc{} includes three important functions in system control, namely, estimation (data), learning (training) and control (algorithm execution). It also highlights three major sources that contribute to making a system non-intelligent: lack of data samples, incorrect training and parameter tuning, and inefficient control algorithms. 
An intelligent system requires all three to provide good user experience and to be considered smart (Thus, if a search engine does not provide good performance for you at the beginning, it may not be because its algorithm is bad). 













\vspace{-.1in}
\section{Performance Analysis}\label{section:analysis}
In this section, we analyze the performance of \lbisc{}. We focus on three important performance metrics, achieved system intelligence, budget guarantee, and algorithm convergence time. 
\huang{The optimality and convergence analysis is challenging. In particular, the accuracy of the \mle{} estimator affects the quality of dual-learning, which in turn affects algorithm convergence and performance. Thus, the analysis must simultaneously take into account all three components.}

Throughout our analysis, we make the following assumptions, in which we again use $\hat{g}_{\hat{\bv{\pi}}}(\gamma)$ to denote the dual function   in (\ref{eq:dual})  with different $\bv{\epsilon}$,  $\bv{\delta}$ and $\bv{\pi}$ values. 
\begin{assumption}\label{assumption:bdd-LM}
There exists a constant $\nu=\Theta(1)>0$ such that for any valid state distribution $\bv{\pi}' = (\pi'_{1}, ..., \pi'_{H})$ with $\|\bv{\pi}' - \bv{\pi} \|\leq \nu$, 
there exist a set of actions $\{\bv{\mu}^{(h)}_j\}_{j=1, 2, 3}$ with $\bv{\mu}^{(h)}_j\in\script{U}_{z_h}$ and some variables $\{\theta^{(h)}_j\geq0\}_{j=1, 2, 3}$ with $\sum_j\theta^{(h)}_j=1$ for all $z_h$ (possibly depending on $\bv{\pi}'$), such that:
\begin{eqnarray*}
\hspace{-.35in}&&\sum_h\pi'_h\sum_{j=1}^3\theta^{(h)}_j\sum_m [C_m(\mu^{(h)}_{mpj}, z_h) +(1-\mu^{(h)}_{mpj}) a^{(i_h)}_m\overline{C}_m]\leq\rho_0,  
\end{eqnarray*}
where $\rho_0\triangleq\rho-\eta>0$ with $\eta=\Theta(1)>0$ is independent of $\bv{\pi}'$. 
Moreover, for all transition probabilities $\bv{\epsilon}'$ and $\bv{\delta}'$ with $\|\bv{\epsilon}' - \bv{\epsilon} \|\leq \nu$ and $\|\bv{\delta}' - \bv{\delta} \|\leq \nu$, $\rho$ satisfies: 
\begin{eqnarray}
\rho\geq \sum_m \max[\epsilon'_m \overline{C}_m, (1-\delta'_m)\overline{C}_m]. \,\,\Diamond  \label{eq:rho-assumption}
\end{eqnarray}
\end{assumption} 
\vspace{-.15in}
\begin{assumption}\label{assumption:unique} There exists a constant $\nu=\Theta(1)>0$ such that, for any valid state distribution $\bv{\pi}' = (\pi'_{1}, ..., \pi'_{H})$ with $\|\bv{\pi}' - \bv{\pi} \|\leq \nu$,  and transition probabilities $\bv{\epsilon}'$ and $\bv{\delta}'$ with $\|\bv{\epsilon}' - \bv{\epsilon} \|\leq \nu$ and $\|\bv{\delta}' - \bv{\delta} \|\leq \nu$,  $\hat{g}_{\hat{\bv{\pi}}}(\gamma)$ has a unique optimal solution $\gamma^*>\bv{0}$ in $\mathbb{R}$. $\Diamond$
\end{assumption} 
\huang{These two assumptions are standard in the network optimization literature, e.g., \cite{eryilmaz_qbsc_ton07} and \cite{lin-imperfect-scheduling-ton06}. They are necessary conditions that guarantee the budget constraint and are often assumed with $\nu=0$. In our case, having $\nu>0$ means that systems that are alike have similar properties. (\ref{eq:rho-assumption}) is also not restrictive. In fact,  $\sum_m [\pi_{m0}\epsilon'_m \overline{C}_m+ \pi_{m1}(1-\delta'_m)\overline{C}_m]$ ($\pi_{mi}$ is the steady-state probability of being in state $i$ for $m$) is the overall cost without  any pre-service. Hence, (\ref{eq:rho-assumption}) is close to being a necessary condition for feasibility.}  

We now have the third assumption, which is related to the structure of the problem. To state it, we have the following system structural property introduced in \cite{huangneely_dr_tac}. 
\begin{definition}
A system is polyhedral with parameter $\beta>0$ under distribution $\bv{\pi}$ if the dual function $g_{\bv{\pi}}(\gamma)$ satisfies: 
\begin{eqnarray}
g_{\bv{\pi}}(\gamma)\geq g_{\bv{\pi}}(\gamma^*)+\beta\|\gamma^*- \gamma\|.\,\,\,\Diamond\label{eq:polyhedral}
\end{eqnarray}
\end{definition}
\begin{assumption}
\label{assumption:poly} There exists a constant $\nu=\Theta(1)>0$ such that, for any valid state distribution $\bv{\pi}' = (\pi'_{1}, ..., \pi'_{H})$ with $\|\bv{\pi}' - \bv{\pi} \|\leq \nu$,  and transition probabilities $\bv{\epsilon}'$ and $\bv{\delta}'$ with $\|\bv{\epsilon}' - \bv{\epsilon} \|\leq \nu$ and $\|\bv{\delta}' - \bv{\delta} \|\leq \nu$,  $\hat{g}_{\hat{\bv{\pi}}}(\gamma)$ is polyhedral with the same $\beta$. $\Diamond$
\end{assumption}
The polyhedral property often holds for practical systems, especially when   control  action sets are finite (see \cite{huangneely_dr_tac} for more discussions).  

\vspace{-.1in}
\subsection{System Intelligence and Budget}
We first present the performance of \lbisc{} in system intelligence and budget guarantee. 
The following theorem  summarizes the results. 
\begin{theorem}\label{theorem:lbisc-int-budget}
Suppose the system is polyhedral with $\beta=\Theta(1)>0$. By choosing 
$\theta=\max(\frac{V\log(V)^2}{\sqrt{N(T)}},\log(V)^2)$ and a sufficiently large $V$,  with probability at least $1-2Me^{-\log(V)^2/4}$, \lbisc{} achieves: 
\begin{itemize}
\item Budget:  
 \begin{eqnarray}
\hspace{-.3in}&&\tilde{d}(t)\leq d_{\max}\triangleq Vr_d / \hat{D}_{\min} + MC_{\max},\,\forall\,t \label{eq:bdd-d-tilde} \\ 
\hspace{-.3in}&&\overline{d(t)}  = O( \max(\frac{V\log(V)^2}{\sqrt{N(T)}},\log(V)^2) ). \label{eq:bdd-d}
\end{eqnarray}
Here $\overline{d(t)} = \limsup_{t\rightarrow\infty}\frac{1}{t}\sum_{\tau=0}^{t-1}\expect{d(t)}$ and $\hat{D}_{\min}=\Theta(1)$ (defined in (\ref{eq:dmin})). (\ref{eq:bdd-d-tilde}) implies $C_{\text{av}}(\Pi)\leq\rho$. 

\item System intelligence: 
\begin{eqnarray}
\hspace{-.3in}I(\mathtt{LBISC}, \rho)\geq I(\rho) - \frac{B_1+1}{V}  - \max_{z_h}\frac{e_{\max}(\overline{T}^2_h  - \overline{T}_h )}{2V\overline{T}_h}. \label{eq:intelligence-bdd}
\end{eqnarray}
Here $B_1\triangleq B+ 2M(Vr_d +d_{\max} C_{\max})\log(V)/\sqrt{N(T)}$, $e_{\max}\triangleq(MC_{\max}+\rho)^2$, and $\overline{T}_h$ and $\overline{T}^2_h$ are the  first and second moments of return times of  state $z_h$. $\Diamond$ 
\end{itemize}
\end{theorem}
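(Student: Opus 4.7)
The plan is to condition on a high-probability ``good learning event'' $\mathcal{E}$ on which the \mle{} outputs are close to the true statistics, and then run a drift-plus-penalty analysis on the shifted queue $\tilde{d}(t)$ whose offset is tuned so that $\gamma^{*}_T$ acts like a near-optimal Lagrange multiplier for the actual system.

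First, I would quantify $\mathcal{E}$. Each $\hat{\epsilon}_m(T)$ and $\hat{\delta}_m(T)$ is a ratio of sums of bounded indicators over $N(T)$ transition observations, so a Hoeffding/Chernoff-type bound gives $\Pr\{\|\hat{\bv{\epsilon}}-\bv{\epsilon}\|+\|\hat{\bv{\delta}}-\bv{\delta}\|+\|\hat{\bv{\pi}}-\bv{\pi}\|>c\log(V)/\sqrt{N(T)}\}\leq 2Me^{-\log(V)^{2}/4}$ for a suitable constant $c$. For large enough $V$ this deviation is below the $\nu$ of Assumptions \ref{assumption:bdd-LM}--\ref{assumption:poly}, so on $\mathcal{E}$ the estimated dual $\hat{g}_{\hat{\bv{\pi}}}$ is still polyhedral with slope $\beta$, has a unique minimizer $\gamma^{*}_T$, and satisfies $|\gamma^{*}_T-\gamma^{*}|=O(\log(V)/\sqrt{N(T)})$ by the polyhedral stability of the minimizer under small perturbations of the dual. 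Combined with the known bound $\gamma^{*}=O(V)$ on the true optimal multiplier, this also certifies that $\gamma^{*}_T$ is finite so the $V\log(V)$ cap is never triggered on $\mathcal{E}$.

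Next, I would prove the deterministic bound (\ref{eq:bdd-d-tilde}). The key observation is that \bisc{} chooses $\mu_{mp}=1$ only when $Va_m^{(i)}(r_{mp}-r_{mc})\geq \tilde{d}(t) D_m(t)$. Using Assumption~\ref{assumption:bdd-LM} (in particular (\ref{eq:rho-assumption})) applied to $\hat{\bv{\epsilon}},\hat{\bv{\delta}}$, one extracts a uniform lower bound $\hat{D}_{\min}=\Theta(1)$ on the ``effective'' per-unit cost differential, so whenever $\tilde{d}(t)>Vr_d/\hat{D}_{\min}$ every coordinate is set to zero and the single-slot cost increment is at most $MC_{\max}$, capping $\tilde{d}(t)$ at $d_{\max}$ as in (\ref{eq:bdd-d-tilde}); since $\tilde{d}(t)$ dominates $d(t)$, this also implies $C_{\text{av}}(\Pi)\leq\rho$ and so (\ref{eq:cond-cost}) holds. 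For the average bound (\ref{eq:bdd-d}), I would use a Lyapunov function centered at $\gamma^{*}_T-\theta$ and invoke the polyhedral drift lemma of \cite{huangneely_dr_tac}: because \bisc{} maximizes $V\tilde{r}-\tilde{d}\tilde{C}$ and $\gamma^{*}_T$ is the minimizer of $\hat{g}_{\hat{\bv{\pi}}}$, a uniform slope $\beta$ of $\hat{g}_{\hat{\bv{\pi}}}$ translates into an $\Omega(\beta)$ negative drift on $\tilde{d}$ whenever it exceeds $(\gamma^{*}_T-\theta)^{+}$ by $\Theta(\log(V)^{2})$. The choice $\theta=\max(V\log(V)^{2}/\sqrt{N(T)},\log(V)^{2})$ absorbs both the $O(\log(V)/\sqrt{N(T)})$ bias in $\gamma^{*}_T$ and the $O(\log(V)^{2})$ statistical fluctuation, yielding the stated steady-state queue size.

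Finally, for the intelligence bound I would add $-V\tilde{r}(\bv{\mu}(t))$ to the drift inequality (\ref{eq:drift-ineq-1}) and compare \bisc{}'s minimization against the hypothetical policy that achieves $g_{\bv{\pi}}(\gamma^{*})\geq V\cdot I(\rho)$. Two error sources then enter: (a) \bisc{} minimizes using $\hat{\bv{\epsilon}},\hat{\bv{\delta}}$ rather than the true parameters, contributing the $2M(Vr_d+d_{\max}C_{\max})\log(V)/\sqrt{N(T)}$ term folded into $B_1$; (b) the effective reward/cost $(\tilde{r},\tilde{C})$ are defined per slot but the true reward materializes one slot later and $\bv{A}(t)$ is Markovian, so telescoping the per-slot drift over cycles of each state $z_h$ introduces a return-time correction $e_{\max}(\overline{T}_h^{2}-\overline{T}_h)/(2\overline{T}_h)$ by a standard Markov coupling argument. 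Summing the drift over a long horizon, dividing by $Vt$, and taking $t\to\infty$ yields (\ref{eq:intelligence-bdd}); outside the good event the loss is absorbed by the $2Me^{-\log(V)^{2}/4}$ probability factor. The main obstacle will be step~(b): carefully handling the timing mismatch between the ``effective'' one-step surrogate and the realized reward/cost over a Markov recurrence while keeping the learning error and the $\tilde{d}$-drift argument compatible; this is where the return-time moments $\overline{T}_h$ and $\overline{T}_h^{2}$ naturally appear.
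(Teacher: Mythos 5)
Your plan follows essentially the same route as the paper: condition on the \mle{} concentration event, establish the deterministic cap on $\tilde{d}(t)$ from the threshold structure of (\ref{eq:action-opt}) together with (\ref{eq:rho-assumption}), get the steady-state queue bound from the polyhedral drift lemma of \cite{huangneely_dr_tac} centered at the shifted multiplier, and prove the intelligence bound by drift-plus-penalty with a telescoping over Markov return times. Two small slips are worth flagging. First, you state the multiplier bias as $O(\log(V)/\sqrt{N(T)})$, but since the dual function scales as $\Theta(V)$, a parameter perturbation of size $O(\log(V)/\sqrt{N(T)})$ shifts the dual value by $O(V\log(V)/\sqrt{N(T)})$, and the polyhedral slope $\beta=\Theta(1)$ then gives $|\gamma^*_T-\gamma^*|=O(V\log(V)/\sqrt{N(T)})$ (the paper's $d_\gamma$); this factor of $V$ is precisely why $\theta$ must contain the $V\log(V)^2/\sqrt{N(T)}$ term --- under your stated bias the first term of $\theta$ would be superfluous. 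Second, when $\tilde{d}(t)$ is large the algorithm does not set \emph{every} coordinate to zero: coordinates with $\hat{D}_m(t)\leq 0$ are still set to one, and one must separately observe that for those $m$ the realized cost $C_m(1,\bv{S}(t))\leq\hat{a}_m^{(i)}\overline{C}_m$, so the total still falls below $\rho$ by (\ref{eq:rho-assumption}); relatedly, your sketch omits the drift-augmentation term $-(\gamma^*_T-\theta)^+[\rho-\tilde{C}(\bv{\mu}(t))]$ whose time average must be shown to be $O(1/V)$ via the queue concentration bound --- this is the source of the extra $+1$ in $(B_1+1)/V$. Neither issue changes the architecture of the argument, which matches the paper's.
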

\begin{proof}
See Appendix B. 
\end{proof}

Theorem \ref{theorem:lbisc-int-budget} shows that  \lbisc{} achieves an  $[O(N(T)^{-\frac{1}{2}} + \epsilon), O(\max( N(T)^{-\frac{1}{2}}/\epsilon, \log(1/\epsilon)^2)]$ intelligence-budget tradeoff (taking $\epsilon=1/V$), and the system intelligence level can indeed be pushed arbitrarily close to $I(\rho)$ under \lbisc{}. 
\huang{Thus, by varying the value of $V$, one can tradeoff the intelligence level loss and the budget deficit as needed.}  
Although Theorem \ref{theorem:lbisc-int-budget} appears similar to previous results with learning, e.g., \cite{huang-learning-sig-14}, its analysis is different due to (i) the Markov nature of the demand state $\bv{A}(t)$, and (ii) the learning error in both transition rates in (\ref{eq:action-opt}) and the distribution. 




\vspace{-.1in}
\subsection{Convergence Time}
We now look at the convergence speed of \lbisc, which measures how fast the algorithm learns the desired system operating point. 
%
%
To  present the results, we adopt the following definition of convergence time from \cite{huang-learning-sig-14}  to our setting. 
\begin{definition} \label{def:convergence-time}
Let $\zeta>0$ be a given constant.  The $\zeta$-convergence time of a control algorithm, denoted by $T_{\zeta}$, is the time it takes for  $\tilde{d}(t)$ to get to within $\zeta$ distance of $\gamma^*$, i.e., 
$T_{\zeta}\triangleq\inf\{t: \, |\tilde{d}(t)-\gamma^*|\leq\zeta\}$. $\Diamond$ 
\end{definition}

\huang{The intuition behind Definition \ref{def:convergence-time} is as follows. Since the \lbisc{} algorithm is a queue-based algorithm, the algorithm will starting making optimal choice of actions once $\tilde{d}(t)$ gets close to $\gamma^*$. Hence, $T_{\zeta}$ naturally captures the time it takes for \lbisc{} to converge.} 
We now present our theorem. For comparison, we also analyze the convergence time of \bisc.\footnote{Here the  slower convergence speed of \bisc{} is due to the fact that it does not utilize the information to perform learning-aided control. We present this result to highlight the importance of learning in control.}  
\begin{theorem}\label{theorem:convergence}
Suppose the conditions in Theorem \ref{theorem:lbisc-int-budget} hold. Under \lbisc, with probability at least  $1-2Me^{-\log(V)^2/4}$, 
\begin{eqnarray}
\expect{  T^{\mathtt{LBISC} }_{D_1}  } &=&  O(\max(\frac{V\log(V)^2}{ \sqrt{N(T)}},  \log(V)^2 ))  + T, \\
\expect{ T^{\mathtt{BISC} }_{D_2} } &=& \Theta(V). 
\end{eqnarray}
Here $D_1=O(V\log(V)/\sqrt{N(T)} +D)$ with $D=\Theta(1)$ and $D_2=\Theta(1)$. 
\end{theorem}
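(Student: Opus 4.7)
The plan is to prove the two convergence statements separately, with both reducing to a drift analysis of the effective queue $\tilde{d}(t)$ toward its equilibrium value $\gamma^*$, but with very different starting points.

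For the \lbisc{} bound, I would first establish an MLE concentration event: via a Hoeffding-type bound applied to each transition count appearing in (\ref{eq:est-epsilon})--(\ref{eq:est-delta}), and using the fact that each state $i\in\{0,1\}$ is visited $\Theta(N(T))$ times, it follows that with probability at least $1-2Me^{-\log(V)^2/4}$ we have $|\hat{\epsilon}_m-\epsilon_m|$, $|\hat{\delta}_m-\delta_m|=O(\log(V)/\sqrt{N(T)})$, and hence $\|\hat{\bv{\pi}}-\bv{\pi}\|=O(\log(V)/\sqrt{N(T)})$. Plugged into the dual function (\ref{eq:dual}), this yields $|\hat{g}_{\hat{\bv{\pi}}}(\gamma)-g_{\bv{\pi}}(\gamma)|=O(V\log(V)/\sqrt{N(T)})$ uniformly on a bounded range of $\gamma$. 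Combining this perturbation bound with the polyhedral inequality (\ref{eq:polyhedral}) gives $|\gamma^*_T-\gamma^*|=O(V\log(V)/\sqrt{N(T)})$ via the standard comparison $\hat{g}_{\hat{\bv{\pi}}}(\gamma^*_T)\le\hat{g}_{\hat{\bv{\pi}}}(\gamma^*)$ combined with (\ref{eq:polyhedral}) and $\beta=\Theta(1)$.

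With these two estimates, at time $T$ the effective queue is initialized at $\tilde{d}(T)=(\gamma^*_T-\theta)^+$, so $|\tilde{d}(T)-\gamma^*|\le\theta+O(V\log(V)/\sqrt{N(T)})=O(\max(V\log(V)^2/\sqrt{N(T)},\log(V)^2))$, which is already of the claimed order. I would then establish a drift-toward-$\gamma^*$ property for $\tilde{d}(t)$: whenever $|\tilde{d}(t)-\gamma^*|>D_1$, the polyhedral property of $g_{\bv{\pi}}$ together with the correspondence between the per-slot \lbisc{} objective (\ref{eq:action-opt}) and the per-state Lagrangian subproblem (\ref{eq:dual-state}) yields a deterministic conditional drift of magnitude $\Omega(1)$ toward $\gamma^*$ (following the template of \cite{huang-learning-sig-14}). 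Since per-slot jumps are bounded by $MC_{\max}=O(1)$, an $O(|\tilde{d}(T)-\gamma^*|)$ number of slots suffice to hit the $D_1$-neighborhood of $\gamma^*$, and adding the learning time $T$ gives the claimed total. For the \bisc{} statement, no learning phase is used and $d(0)=0$, while the equilibrium $\gamma^*$ scales as $\Theta(V)$ because of the $V$ factor multiplying the reward in (\ref{eq:dual}); combined with the $O(1)$ per-slot queue change, this forces $\Omega(V)$ slots to reach within $D_2=\Theta(1)$ of $\gamma^*$, and the same drift-toward-$\gamma^*$ argument yields a matching $O(V)$ upper bound.

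The main obstacle is making the drift-toward-$\gamma^*$ step rigorous for \lbisc: the per-slot objective in (\ref{eq:action-opt}) uses the learned $\hat{a}^{(i)}_m$ and $\hat{\overline{C}}_m$ in place of the true quantities, so the conditional drift of $\tilde{d}(t)$ is actually directed toward $\gamma^*_T$ rather than toward $\gamma^*$. The gap $|\gamma^*_T-\gamma^*|=O(V\log(V)/\sqrt{N(T)})$ must therefore be absorbed into the tolerance $D_1$ as a systematic bias, which is precisely the origin of the $V\log(V)/\sqrt{N(T)}$ term in $D_1$. Conditioning throughout on the high-probability MLE event and verifying that Assumptions \ref{assumption:bdd-LM}--\ref{assumption:poly} continue to hold under the perturbed statistics (using the slack $\nu=\Theta(1)$) closes the argument.
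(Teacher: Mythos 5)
Your proposal is correct and follows essentially the same route as the paper: MLE concentration plus the polyhedral perturbation bound give $|\gamma^*_T-\gamma^*|=O(V\log(V)/\sqrt{N(T)})$, the dual-learning initialization places $\tilde{d}(T)$ within $O(\theta)$ of the attractor, an $\Omega(1)$ negative drift outside a $D$-neighborhood combined with a supermartingale hitting-time lemma gives the expected convergence time, and the multiplier bias is absorbed into $D_1$; the \bisc{} case follows from $d(0)=0$, $\gamma^*=\Theta(V)$, and bounded per-slot increments. The only cosmetic difference is that the paper's drift attractor is $\hat{\gamma}^*$ (the minimizer of the intermediate dual with the true distribution but estimated transition rates) rather than $\gamma^*_T$, which changes nothing since both lie within $d_\gamma$ of $\gamma^*$.
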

\begin{proof}
See Appendix C. 
\end{proof}

Here $D_1$ may be larger than $ D_2$ is due to the fact that \lbisc{} uses inaccurate estimates of $a^{(i)}_m$ for making decisions. 
In the case when $N(T)=T$, we can recover the $O(V^{2/3})$ convergence time results in \cite{huang-learning-sig-14} by choosing $T=V^{2/3}$. 
Theorem \ref{theorem:convergence} also shows that it is possible to achieve  faster convergence if a system has a larger population of users from which it can collect useful samples for learning the target user quickly. It also explicitly quantifies the speedup factor to be proportional to the \emph{square-root} of the user population size, i.e., when $N(T)=N*T$ where $N$ is the number of users, the speedup factor is $\sqrt{N(T)}/\sqrt{T}=\sqrt{N}$. 
 
This result reveals the interesting fact that a big company that has many users naturally has advantage over companies with smaller user populations, since they can collect more useful data  and adapt to a ``smart'' state faster. 




%


\section{Simulation} \label{section:simulation}
We now present simulation results for   \bisc{} and \lbisc. 
We simulate a three-application system ($M=3$) with the following setting. $(r_{1p}, r_{2p}, r_{3p}) = (3, 5, 8)$ and $r_{mc}=1$ for all $m$. Then, we use $\bv{\epsilon}=(0.6, 0.5, 0.3)$ and $\bv{\delta}=(0.2, 0.6, 0.5)$. 
The channel state space is $\script{S}=\{1, 2\}$ for all $m$, with $\prob{S_1(t)=1}=0.5$, $\prob{S_2(t)=1}=0.3$, and $\prob{S_3(t)=1}=0.3$. The service cost is given by $C_m(1, \bv{S}(t))=S_m(t)$. 
We simulate the system for $T_{sim}=10^5$ slots, with $V=\{5, 10, 20, 50, 100 \}$. For \lbisc, we simulate a user population effect function as in (\ref{eq:user-population}), i.e., $N(T) = f(\#\, \text{of user})\cdot T$ and choose $f(\#\, \text{of user})=2, 5, 8$. 
%
We also fix the value $\rho=3.5$ and choose the learning time $T=V^{2/3}$. 

We first present Fig. \ref{fig:I_versus_rho} that shows  $I(\rho)$ as a function of $\rho$. 
For comparison, we include a second setting, where we change $(r_{1p}, r_{2p}, r_{3p}) = (4, 5, 3)$, $\bv{\epsilon}=(0.8, 0.4, 0.3)$, $\bv{\delta}=(0.2, 0.9, 0.5)$, and $\prob{S_2(t)=1}=0.8$. 
It can be seen that $I(\rho)$ first increases as $\rho$ increases. Eventually $\rho$ becomes more than needed after all the possible predictability has been exploited. 
Then, $I(\rho)$ becomes flat. This  \emph{diminishing return} property   is consistent with our understanding obtained  in Section \ref{section:characterize}.  
\begin{figure}[ht] 
\centering
\includegraphics[height=1.3in, width=3.2in]{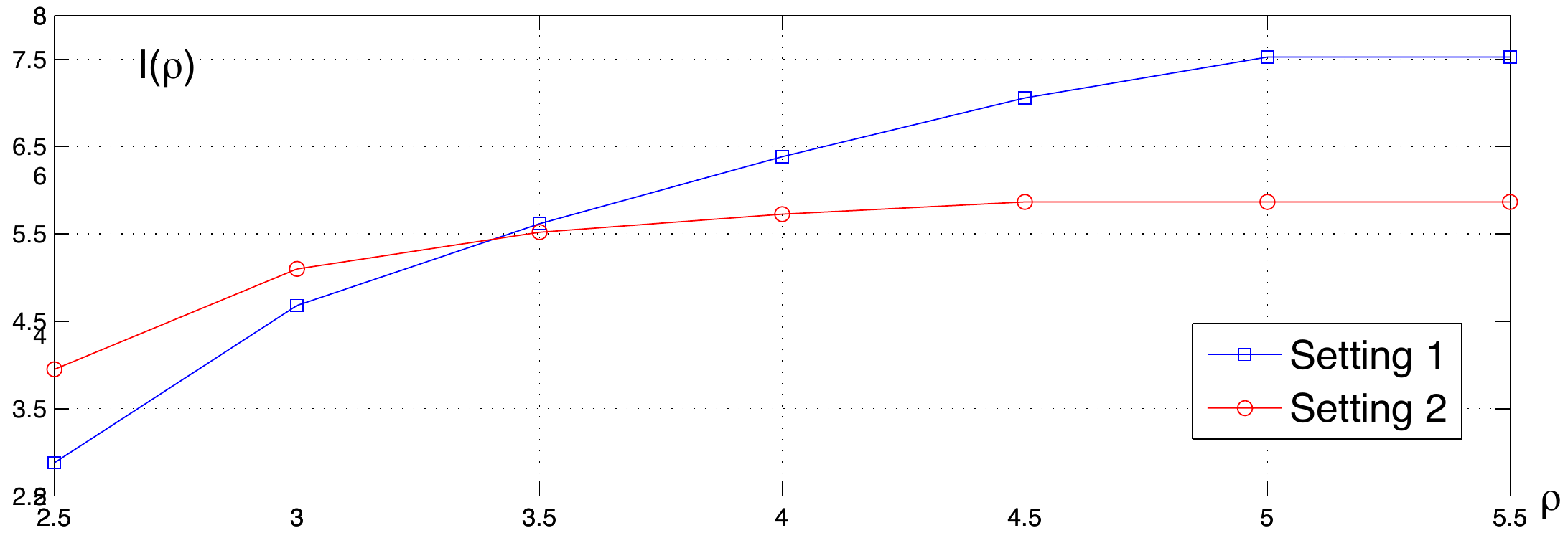}
\vspace{-.1in} 
\caption{$I(\rho)$ versus $\rho$:  in the two settings tested, $I(\rho)$ are both   concave increasing in $\rho$.} 
\label{fig:I_versus_rho}
\vspace{-.1in}
\end{figure}
 
We now look at algorithm performance. From Fig. \ref{fig:int-deficit} we see that both \bisc{} and \lbisc{} are able to achieve  high intelligence levels. 
Moreover,   \lbisc{} does much better in controlling the deficit ($2\times$-$4\times$ saving compared to \bisc). We remark here that \bisc{} assumes full knowledge beforehand, while \lbisc{} learns them online. 
\begin{figure}[ht] 
\centering
\includegraphics[height=1.4in, width=3.3in]{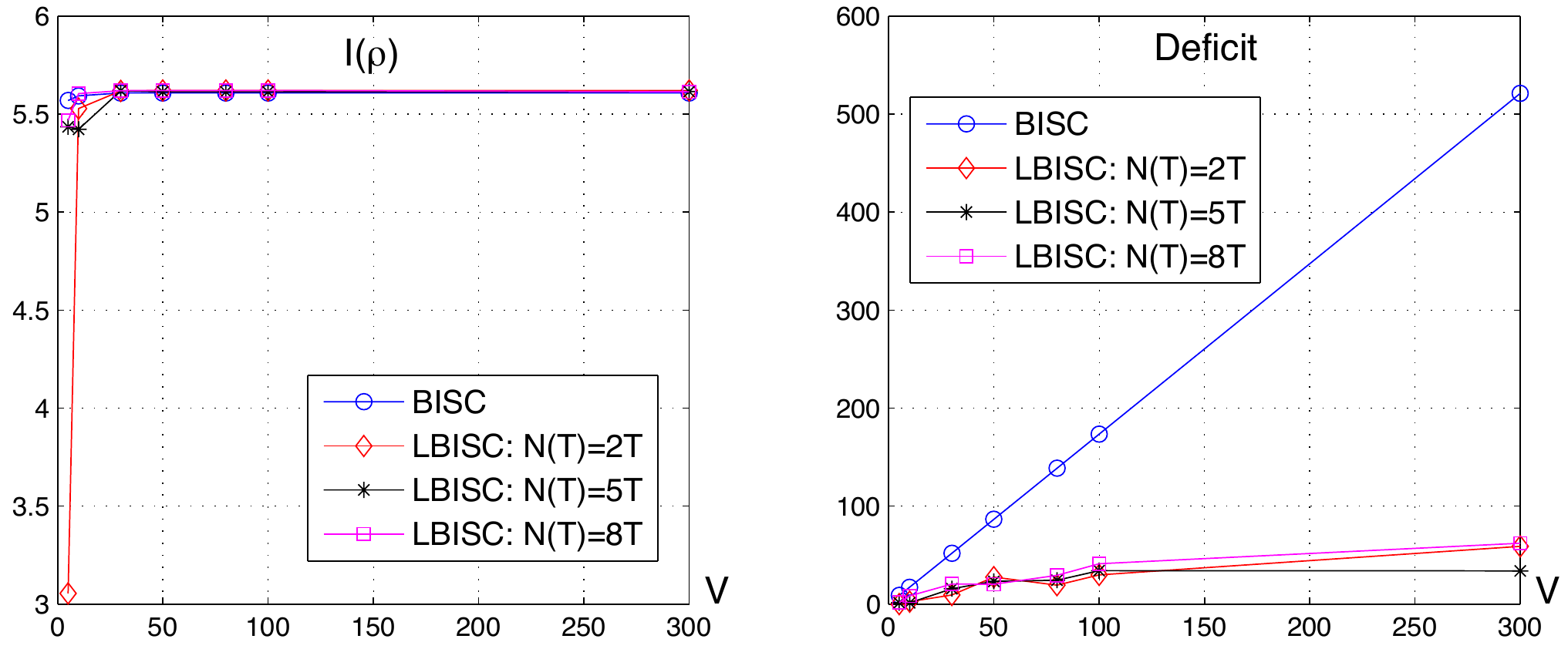}
\vspace{-.1in} 
\caption{Intelligence and deficit performance of \bisc{} and \lbisc{} with different user-population effect.} 
\label{fig:int-deficit}
\vspace{-.1in}
\end{figure}

Finally, we look at the convergence properties of the algorithms. Fig. \ref{fig:sample-path} compares \bisc{} and \lbisc{} with $N(t)=8T$ and $V=300$. We see that \lbisc{} converges at around $460$ slots, whereas \bisc{} converges at around $920$ slots, resulting in a $2\times$ improvement. Moreover, the actual deficit level under \lbisc{} is much smaller compared to that under \bisc{} ($80$ versus $510$, a $6\times$ improvement). 
From this result, we see that it is important to efficiently utilize the data samples collected over time, and dual learning provides one way to boost algorithm convergence. 
\begin{figure}[ht] 
\centering
\vspace{-.1in}
\includegraphics[height=1.5in, width=3.3in]{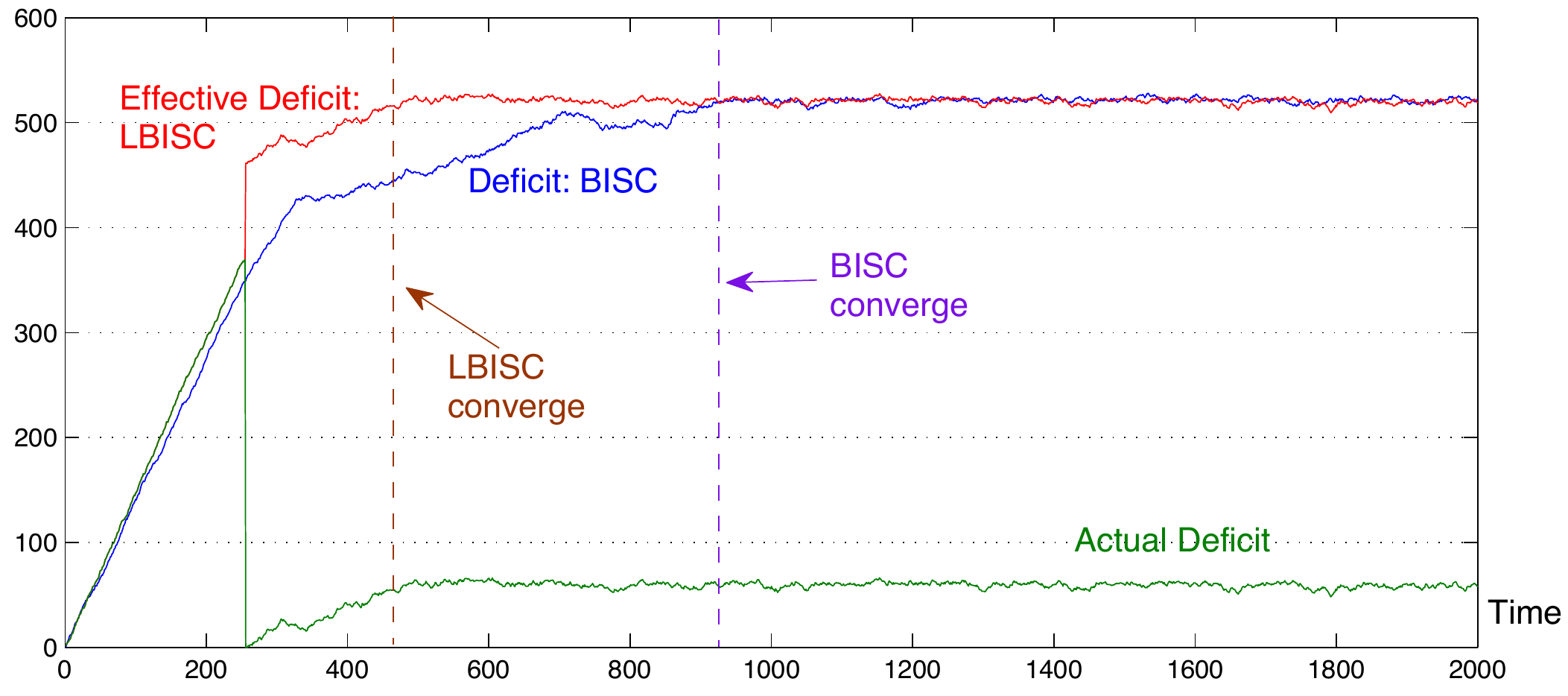}
\vspace{-.1in} 
\caption{Convergence of \bisc{} and \lbisc{} with $N(T)=8T$ for $V=300$.} 
\label{fig:sample-path}
\vspace{-.1in}
\end{figure}


\vspace{-.1in}
\section{Conclusion}\label{section:conclusion} 
In this paper, we present a general framework for defining and understanding system intelligence, and  propose a novel metric for measuring the smartness of dynamical systems,  defined to be the maximum average reward rate obtained by proactively serving user demand subject to a resource constraint.  
We show that the highest system intelligence level is jointly determined by system resource, action costs, user demand volume, and correlation among demands. We then develop a learning-aided algorithm called Learning-aided Budget-limited Intelligent System Control (\mtt{LBISC}), which efficiently utilizes data samples about system dynamics and achieves near-optimal intelligence, and guarantees a deterministic deficit bound. Moreover, \lbisc{} converges much faster compared to its non-learning based counterpart. 

$\vspace{-.1in}$
\bibliographystyle{unsrt}
\bibliography{mybib}

\vspace{-.1in}
\section*{Appendix A -- Proof of Theorem \ref{theorem:upper-bdd}}
We prove Theorem \ref{theorem:upper-bdd} here. 
\begin{proof} (Theorem \ref{theorem:upper-bdd}) 
Consider any control scheme $\Pi$ and fix a time $T$. 
Define the joint state $z(t)=(\bv{A}(t), \bv{S}(t))$ and denote its state space as $\script{Z}=\{z_1, ..., z_H\}$. 
Then, consider a state $z_h$ and let $\script{T}_h(T)$ be the set of slots with $z(t)=z_h$ for $t=1, ..., T$. 
Denote $\{ \bv{\mu}_{p}(0), ..., \bv{\mu}_{p}(T)\}$ the pre-service decisions made by $\Pi$. Denote the following joint reward-cost pair:\footnote{\huang{To save space, here we use the notation $\expect{X; Y\left.|\right.A}$ to denote $(\expect{X\left.|\right.A}, \expect{Y\left.|\right.A})$.}}  
\begin{eqnarray}
\hspace{-.3in}&&(Reward^{(h)}(T), Cost^{(h)}(T)) \triangleq \nonumber\\
\hspace{-.3in}&&   \frac{1}{T}\sum_{\tau=0}^{T-1}\sum_m\expect{ I_{[A_m(\tau+1)=1]} [\mu_{mp}(\tau)r_{mp} + (1-\mu_{mp}(\tau))r_{mc}];  \nonumber\\
\hspace{-.3in}&&  C_m(\mu_{mp}(\tau), z_h) + (1-\mu_{mp}(\tau))I_{[A_m(\tau+1)=1]}\overline{C}_m \left.|\right. z(\tau)=z_h   }. \nonumber
\end{eqnarray}

Notice that this is a mapping from $z_h$ to a subset in $\mathbb{R}^2$ and that both the reward and the cost are continuous. 
Also note that $\expect{ I_{[A_m(\tau+1)=1]}} = a^{(i_h)}_m$, where $i_h=A_m^{(h)}$  is defined in (\ref{eq:effective-reward-def}) to denote the probability of having $A_m(\tau+1)=1$ given $A_m(\tau)=A_m^{(h)}$. Using the independence of $\bv{A}(t)$ and $\bv{S}(t)$, and Caratheodory's theorem \cite{bertsekasoptbook}, it follows that there exists three vectors $\bv{\mu}^{(h)}_j(T)\in\script{U}_{z_h}, j=1, 2, 3$, with appropriate weights $\theta^{(h)}_j(T)\geq0, j=1, 2, 3$, and $\sum_i \theta^{(h)}_j(T)=1$, so that: 
\begin{eqnarray}
\hspace{-.3in}&&(Reward^{(h)}(T), Cost^{(h)}(T))  \\
\hspace{-.3in}&&\triangleq \sum_{j=1}^3\theta^{(h)}_j(T) \sum_m \big(a^{(i_h)}_m [\mu^{(h)}_{mpj}(T)r_{mp} + (1- \mu^{(h)}_{mpj}(T))r_{mc}]; \nonumber \\
\hspace{-.3in}&&\qquad\qquad\quad\quad C_m(\mu^{(h)}_{mpj}(T), z_h)     +(1-\mu^{(h)}_{mpj}(T)) a^{(i_h)}_m\overline{C}_m\big). \nonumber
\end{eqnarray}
Now consider averaging the above over all $z_h$ states.  
We get: 
\begin{eqnarray}
\hspace{-.3in}&&(Reward_{av}(T), Cost_{av}(T))\triangleq   \nonumber \\
\hspace{-.3in}&&\sum_h\pi_h\sum_{j=1}^3\theta^{(h)}_j(T)\sum_m \big(a^{(i_h)}_m [\mu^{(h)}_{mpj}(T)r_{mp}  + (1- \mu^{(h)}_{mpj}(T))r_{mc}];\nonumber \\
\hspace{-.3in}&&\qquad\qquad\qquad   C_m(\mu^{(h)}_{mpj}(T), z_h)     +(1-\mu^{(h)}_{mpj}(T)) a^{(i_h)}_m\overline{C}_m\big). \nonumber
\end{eqnarray}
Using a similar argument as in the proof of Theorem 1 in \cite{neelyenergy}, one can show that there exist limit points $\theta^{(h)}_j\geq0$ and $\bv{\mu}^{(h)}_j$ as $T\rightarrow\infty$, so that the reward-cost tuple can be expressed as: 
\begin{eqnarray}
\hspace{-.3in}&&(Reward_{av}, Cost_{av}) \nonumber \\
\hspace{-.3in}&&=  \sum_h\pi_h\sum_{j=1}^3\theta^{(h)}_j\sum_m \big(a^{(i_h)}_m [\mu^{(h)}_{mpj}r_{mp}  + (1- \mu^{(h)}_{mpj})r_{mc}];\nonumber \\
\hspace{-.3in}&&\qquad\qquad\qquad\qquad   C_m(\mu^{(h)}_{mpj}, z_h)     +(1-\mu^{(h)}_{mpj}) a^{(i_h)}_m\overline{C}_m\big). \nonumber
\end{eqnarray} 
This shows that for an arbitrarily control algorithm $\Pi$, its average reward and cost can be expressed as those in problem (\ref{eq:bdd-obj}). Hence, its budget limited average reward cannot exceed $\Phi$, which is the optimal value of (\ref{eq:bdd-obj}). This shows that $\Phi\geq I(\rho)$. 

The other direction $I(\rho)\geq\Phi$ will be shown in the analysis of the \lbisc{} algorithm, where we show that \lbisc{} achieves an intelligence level arbitrarily close to $\Phi$. 
\end{proof}

\section*{Appendix B -- Proof of Theorem \ref{theorem:lbisc-int-budget}}
We prove Theorem \ref{theorem:lbisc-int-budget} here with the following two lemmas, whose proofs are given in Appendix D. 
The first lemma bounds the error in estimating $\epsilon_m$ and $\delta_m$. 
\begin{lemma}\label{lemma:mle-bdd}
\mtt{MLE}($T$) ensures that: 
\begin{eqnarray}
\prob{\max_m|\hat{\epsilon}_m - {\epsilon}_m, \hat{\delta}_m - {\delta}_m|\geq \frac{\log(V)}{\sqrt{N(T)}}}\leq 2Me^{-\log(V)^2/4}.\label{eq:error-bdd} 
\end{eqnarray}
\end{lemma}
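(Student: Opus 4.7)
The plan is to derive the concentration bound by decomposing $\hat{\epsilon}_m - \epsilon_m$ into a ratio whose numerator is, conditional on the visited state sequence, a centered sum of independent bounded Bernoulli increments, then apply Hoeffding's inequality and finish with a union bound over the $2M$ parameters $\{\epsilon_m,\delta_m\}_{m=1}^M$.

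First I would fix a single application $m$ and treat $\hat{\epsilon}_m$ only, since the argument for $\hat{\delta}_m$ is entirely symmetric. Let $\script{T}_0^m \triangleq \{0 \le t \le N(T)-1 : A_m(t)=0\}$ and $N_0^m \triangleq |\script{T}_0^m|$. The Markov property implies that, conditioned on the entire trajectory $\{A_m(t)\}_{t=0}^{N(T)-1}$, the indicators $\{1_{\{A_m(t+1)=1\}} : t \in \script{T}_0^m\}$ are mutually independent Bernoulli$(\epsilon_m)$. Hence $\hat{\epsilon}_m$ is, conditionally on $N_0^m = n$, the average of $n$ independent $[0,1]$-bounded random variables with mean $\epsilon_m$, so Hoeffding's inequality yields
$$\Pr\bigl\{\,|\hat{\epsilon}_m - \epsilon_m| \ge x \,\bigl|\, N_0^m = n\,\bigr\} \;\le\; 2 e^{-2 n x^2}.$$

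Second I would choose $x = \log(V)/\sqrt{N(T)}$ and argue that $N_0^m$ is a constant fraction of $N(T)$ except on a super-polynomially small event. Since $\{A_m(t)\}$ is an ergodic two-state chain with stationary mass $\pi_{m,0} = \delta_m/(\epsilon_m+\delta_m) = \Theta(1)$, a Chernoff-type bound for additive functionals of finite Markov chains (or equivalently a regenerative argument using the return times to state $0$, which have exponential tails) gives $\Pr\{N_0^m \le c' N(T)\} \le e^{-c'' N(T)}$ for constants $c', c'' = \Theta(1)$. On the complementary event, the conditional Hoeffding bound simplifies to $2 e^{-2 c' \log(V)^2}$; together with the negligible atypical event, the law of total probability yields, for $V$ sufficiently large, a single-parameter bound of the form $e^{-\log(V)^2/4}/M$. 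A union bound over the $2M$ events completes the proof.

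The main obstacle, and the only genuinely delicate step, is the random denominator $N_0^m$. A naive application of Hoeffding would require the visits to state $0$ to be i.i.d., which they are not because of the Markov correlations within $\bv{A}_m(t)$. The conditioning trick above circumvents this, but it forces the separate tail bound on $N_0^m$, which in turn relies on the assumed $\Theta(1)$ separation of the stationary distribution (consistent with Assumption \ref{assumption:bdd-LM}) and on the promised sample-count guarantee $N(T) \ge T$. Once these two ingredients are in place, matching the stated constant $1/4$ in the exponent is a routine bookkeeping step in tuning $c'$ and absorbing the atypical event.
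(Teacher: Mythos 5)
Your proposal follows essentially the same route as the paper: it likewise first controls the visit count $N_{m0}$ via a Markov-chain deviation bound (the paper uses Gillman's theorem to show $|N_{m0}-N(T)\pi_{m0}|\le\sqrt{N(T)}\log(V)$ except with probability $c_{m1}e^{-c_{m2}\log(V)^2}$), then treats the transitions out of state $0$ as i.i.d.\ Bernoulli$(\epsilon_m)$ trials and applies a standard concentration bound, and finishes with a union bound over the $2M$ parameters. The only differences are cosmetic (Hoeffding versus the Bernstein-type bound the paper cites, and your slightly loose phrasing that the transition indicators are independent ``conditioned on the entire trajectory'' --- the paper is equally informal on this point), so the argument is correct and matches the paper's.
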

The second lemma bounds the error in estimating $\gamma^*$. In the lemma, we define an intermediate dual function  
$\hat{g}_{\bv{\pi}}(\gamma)$ defined as: 
\begin{eqnarray}
\hspace{-.2in}&& \hat{g}_{\bv{\pi}}(\gamma) \triangleq \sum_h \pi_h\sum_m\sup_{\bv{\mu}^{(h)}_p}\bigg\{ V\hat{a}^{(i_h)}_m[\mu^{(h)}_{mp}r_{mp}  + (1- \mu^{(h)}_{mp})r_{mc}] \nonumber \\
\hspace{-.2in}&&\qquad\qquad - \gamma  [C_m(\mu^{(h)}_{mp}, z_h)      +(1-\mu^{(h)}_{mp}) \hat{a}^{(i_h)}_m\overline{C}_m - \rho]\bigg\}. \label{eq:dual-tilde}
\end{eqnarray}
That is, $\hat{g}_{\bv{\pi}}(\gamma)$ is the dual function defined with the true distribution $\pi_h$ and the estimated $\hat{a}^{(i_h)}_m$.  We use $\hat{\gamma}^*$ to denote the optimal solution of $\hat{g}_{\bv{\pi}}(\gamma)$. 
\begin{lemma} \label{lemma:multiplier-bdd}
With probability at least $1-2Me^{-\log(V)^2/4}$, \mtt{DL} outputs a $\gamma^*_T$ that satisfies $| \gamma^*_T - \gamma^*  | \leq d_{\gamma}$ where $d_{\gamma}\triangleq \frac{cV\log(V)}{\sqrt{N(T)}}$ and  $c=\Theta(1)>0$. Moreover, $| \gamma^* - \hat{\gamma}^*  | \leq d_{\gamma}$. 
\end{lemma}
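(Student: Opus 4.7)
The plan is to establish the two bounds in two stages. The first stage bounds $|\hat{\gamma}^* - \gamma^*|$, which is driven only by the error in transition rates (since $\hat g_{\bv{\pi}}$ uses the \emph{true} distribution $\bv{\pi}$). The second stage bounds $|\gamma^*_T - \hat{\gamma}^*|$, which is driven only by the error in the steady-state distribution $\hat{\bv{\pi}}$. Both stages use the same mechanics: prove a uniform closeness bound between the two dual functions in question, then invert it through the polyhedral property of Assumption \ref{assumption:poly} to get closeness of their minimizers. The lemma then follows by combining the two bounds (absorbing the factor of two into $c$).

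First I would condition on the favorable event of Lemma \ref{lemma:mle-bdd}, on which $\max_m(|\hat{\epsilon}_m - \epsilon_m|,\,|\hat{\delta}_m - \delta_m|) \leq \log(V)/\sqrt{N(T)}$. This event has probability at least $1-2Me^{-\log(V)^2/4}$. On this event, since $a^{(i_h)}_m$ is either $\epsilon_m$ or $1-\delta_m$, I immediately get $|\hat a^{(i_h)}_m - a^{(i_h)}_m| \leq \log(V)/\sqrt{N(T)}$ for all $h,m$. Moreover, the joint steady-state distribution is a product of per-coordinate stationaries $\epsilon_m/(\epsilon_m+\delta_m)$ and $\delta_m/(\epsilon_m+\delta_m)$ (times the known $\bv{S}$-marginal), which is a Lipschitz function of $(\bv{\epsilon},\bv{\delta})$ once we stay bounded away from the degenerate regime; hence $\|\hat{\bv{\pi}} - \bv{\pi}\| = O(\log(V)/\sqrt{N(T)})$. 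Assumption \ref{assumption:bdd-LM} and the high-probability smallness of the errors guarantee that the perturbed problem remains within the neighborhood $\nu$ required by Assumptions \ref{assumption:unique} and \ref{assumption:poly}.

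Next I would establish uniform perturbation bounds on the dual functions. For each fixed $\gamma\geq 0$ and each state $z_h$, the inner supremum in (\ref{eq:dual}) is a maximum over the finite feasible set $\script{U}_{z_h}$ of a functional affine in the coefficients $Va^{(i_h)}_m$ and $\gamma a^{(i_h)}_m$. A standard envelope argument (evaluating each supremum at the maximizer of the other) shows that perturbing $a^{(i_h)}_m$ by at most $\varepsilon_a$ changes the per-state inner value by at most $\sum_m(Vr_d + \gamma\overline{C}_m)\varepsilon_a$. Summing against $\bv{\pi}$ gives $|g_{\bv{\pi}}(\gamma) - \hat g_{\bv{\pi}}(\gamma)| \leq c_1(V+\gamma)\log(V)/\sqrt{N(T)}$. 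A parallel argument, in which the $\pi_h$-perturbation is pulled outside the weighted sum and the per-state value is bounded by $O(V+\gamma)$, gives $|\hat g_{\bv{\pi}}(\gamma) - \hat g_{\hat{\bv{\pi}}}(\gamma)| \leq c_2(V+\gamma)\log(V)/\sqrt{N(T)}$.

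Finally I would invoke the polyhedral property. Setting $\Delta = c_1(V+\gamma_{\max})\log(V)/\sqrt{N(T)}$ and using the standard four-term chain together with optimality of $\hat{\gamma}^*$ for $\hat g_{\bv{\pi}}$ yields $g_{\bv{\pi}}(\hat{\gamma}^*) \leq \hat g_{\bv{\pi}}(\hat\gamma^*)+\Delta \leq \hat g_{\bv{\pi}}(\gamma^*)+\Delta \leq g_{\bv{\pi}}(\gamma^*)+2\Delta$. Combining with $g_{\bv{\pi}}(\hat\gamma^*)\geq g_{\bv{\pi}}(\gamma^*)+\beta|\hat\gamma^*-\gamma^*|$ (Assumption \ref{assumption:poly}) gives $|\hat\gamma^*-\gamma^*|\leq 2\Delta/\beta = O(V\log(V)/\sqrt{N(T)})$. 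Replaying the argument with $\hat g_{\bv{\pi}}$ in the role of $g_{\bv{\pi}}$ and $\hat g_{\hat{\bv{\pi}}}$ in the role of the perturbed dual, and using that Assumption \ref{assumption:poly} also grants the polyhedral property to $\hat g_{\bv{\pi}}$ with the same $\beta$, gives $|\gamma^*_T - \hat\gamma^*| = O(V\log(V)/\sqrt{N(T)})$. The hard part will be controlling the $\gamma$-range that appears in the perturbation bound: since the bound scales like $V+\gamma$, one must first argue that $\gamma^*$, $\hat\gamma^*$, and $\gamma^*_T$ all lie in a common $O(V)$-sized interval, which follows from the Slater-type slack $\eta$ in Assumption \ref{assumption:bdd-LM} and the uniqueness in Assumption \ref{assumption:unique}. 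Choosing $c$ large enough to absorb this $V$-factor delivers $d_\gamma = cV\log(V)/\sqrt{N(T)}$, and the triangle inequality (or just taking the larger constant) completes both claims.
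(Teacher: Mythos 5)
Your proposal is correct and follows essentially the same route as the paper's proof: condition on the \mle{} error event, bound the perturbation of the dual function (using the Slater slack of Assumption \ref{assumption:bdd-LM} to confine the multipliers to an $O(V)$ range, so the perturbation is $O(V\log(V)/\sqrt{N(T)})$), and invert through the polyhedral property via the standard optimality chain. The only cosmetic difference is that you split the error into two stages ($\gamma^*\to\hat\gamma^*$ from rate errors, $\hat\gamma^*\to\gamma^*_T$ from distribution errors) and combine by triangle inequality, whereas the paper compares $g_{\bv{\pi}}$ and $\hat g_{\hat{\bv{\pi}}}$ in one shot and notes the $|\hat\gamma^*-\gamma^*|$ bound is proved similarly.
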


We now prove Theorem \ref{theorem:lbisc-int-budget}. 
\begin{proof} (Theorem \ref{theorem:lbisc-int-budget}) We first prove the budget bound, followed by the intelligence performance. 

(\textbf{Budget}) We want to show that under \lbisc, if $\gamma^*_T$ is estimated accurate enough (happens with high probability), $\tilde{d}(t)$ is deterministically bounded throughout. This will imply that  the budget constraint is met. 
Note that under \lbisc, $D_m(t)$ in (\ref{eq:dm-def}) is defined with $\hat{a}^{(i)}_m$. Thus, we denote it as $\hat{D}_m(t)$. 

To see this, first note from Lemma \ref{lemma:mle-bdd} that with probability at least $1-2Me^{-\log(V)^2/4}$, $\max_m|\hat{\epsilon}_m - {\epsilon}_m, \hat{\delta}_m - {\delta}_m|\leq \frac{\log(V)}{\sqrt{N(T)}}$. Hence, $\|\bv{\epsilon} - \hat{\epsilon}\|\leq \nu$ and $\|\bv{\delta} - \hat{\delta}\|\leq \nu$ when $V$ is large. Thus, (\ref{eq:rho-assumption}) in Assumption \ref{assumption:bdd-LM} holds. 
Denote 
\begin{eqnarray}
\hspace{-.2in}&&\hat{D}_{\min} \triangleq \label{eq:dmin}\\
\hspace{-.2in}&& \qquad\min_{m, k, i}\{  C_m(1, s_k) -  \hat{a}^{(i)}_m\overline{C}_m:  C_m(1, s_k) -  \hat{a}^{(i)}_m\overline{C}_m>0\}, \nonumber
\end{eqnarray}
and  look at the algorithm steps (\ref{eq:dm-def}) and (\ref{eq:action-opt}). We claim that whenever $\tilde{d}(t)\geq Vr_d / \hat{D}_{\min}$, it stops increasing. 
To see this, let us fix an $m$ and consider two cases. 

(i) Suppose in (\ref{eq:dm-def}) $\hat{D}_m(t)>0$ (defined with $\hat{a}^{(i)}_m$), then we must have $\mu_{mp}(t)=0$, as $V\hat{a}_m^{(i)} (r_{mp} - r_{mc}) -  d(t) \hat{D}_m(t)\leq 0$ in (\ref{eq:action-opt}) by definition of $\hat{D}_{\min}$. 

(ii) Suppose instead $\hat{D}_m(t)\leq0$. Although in this case we set $\mu_{mp}(t)=1$, it also implies that the state $s_k$ is such that $C_m(1, s_k)\leq \hat{a}_m^{(i)}\overline{C}_m$ (from the definition in (\ref{eq:dm-def})). 
Combing the two cases, we see that whenever $\tilde{d}(t)\geq Vr_d / \hat{D}_{\min}$, $\tilde{C}(t)\leq \sum_m  \hat{a}_m^{(i)}\overline{C}_m$, which implies that $\tilde{C}(t)\leq\rho$ by (\ref{eq:rho-assumption}) in Assumption \ref{assumption:bdd-LM}. Therefore, we conclude that: 
\begin{eqnarray}
\tilde{d}(t)\leq d_{\max}\triangleq Vr_d / \hat{D}_{\min} + MC_{\max}. \label{eq:bdd-d-tilde2}
\end{eqnarray}
This completes the proof of (\ref{eq:bdd-d-tilde}). The proof for (\ref{eq:bdd-d}) is given in the intelligence part at (\ref{eq:bdd-d-deviation}). 



(\textbf{Intelligence}) First, we add to both sides of (\ref{eq:drift-ineq-1}) the \emph{drift-augmentation} term $\Delta_a(t)\triangleq-(\gamma^*_T - \theta)^+[\rho - \tilde{C}(\bv{\mu}(t))]$ and obtain: 
\begin{eqnarray}
\hspace{-.3in}&&\Delta(t) +\Delta_a(t) - V\tilde{r}(\bv{\mu}(t)) \label{eq:drift-ineq-2} \\
\hspace{-.3in}&&\qquad\qquad \leq B - \bigg(V\tilde{r}(\bv{\mu}(t)) + \tilde{d}(t) [\rho-\tilde{C}(\bv{\mu}(t)) ]\bigg). \nonumber
\end{eqnarray} 
From Lemma \ref{lemma:multiplier-bdd}, we know that the event $\{| \gamma^*_T - \gamma^*  | \leq d_{\gamma} \}$ takes place with probability at least $1-2Me^{-\log(V)^2/4}$. Hence, from now on, we carry out our argument conditioning on this event. 

Note that in every time, we use the estimated $\hat{\bv{\epsilon}}$ and $\hat{\bv{\delta}}$ for decision making, i.e., we maximize: 
\begin{eqnarray*} 
\hspace{-.3in}&&   \sum_{m} \mu_{mp}(t)[V[a_m^{(i)} + e(a_m^{(i)})] (r_{mp} - r_{mc}) \\
\hspace{-.3in}&&   \qquad\qquad -  d(t)(C_m(1, \bv{S}(t)) - [a_m^{(i)} + e(a_m^{(i)})]\overline{C}_m)],  
\end{eqnarray*}
where $e(a_m^{(i)}) =\hat{a}_m^{(i)} - a_m^{(i)}$. Let $\bv{\mu}_p^{L}(t)$ be the action chosen by \lbisc, and let $\bv{\mu}_p^{*}(t)$ be the actions chosen by \bisc, i.e. with the exact $a_m^{(i)}$ values.  
We see then: 
\begin{eqnarray*}
\hspace{-.3in}&& \sum_{m} \mu^L_{mp}(t)V[\hat{a}_m^{(i)}  (r_{mp} - r_{mc}) -  d(t) \hat{D}_m(t)] \\
\hspace{-.3in}&&\qquad\qquad\geq  \sum_{m} \mu^*_{mp}(t)V[\hat{a}_m^{(i)}  (r_{mp} - r_{mc}) -  d(t) \hat{D}_m(t)]. 
\end{eqnarray*}
With the definition of $\hat{D}_m(t)$ and that $\max_m|\hat{\epsilon}_m - {\epsilon}_m, \hat{\delta}_m - {\delta}_m|\leq \frac{\log(V)}{\sqrt{N(T)}}$ in Lemma \ref{lemma:mle-bdd}, this implies that: 
\begin{eqnarray*}
\hspace{-.3in}&& \sum_{m} \mu^L_{mp}(t)V[a_m^{(i)}  (r_{mp} - r_{mc}) -  d(t) D_m(t)] \\
\hspace{-.3in}&&\qquad \geq  \sum_{m} \mu^*_{mp}(t)V[a_m^{(i)}  (r_{mp} - r_{mc}) -  d(t) D_m(t)] - E_{tot}, 
\end{eqnarray*}
where $E_{tot}\triangleq 2M(Vr_d +d_{\max} C_{\max})\frac{\log(V)}{\sqrt{N(T)}}$. This means that the actions chosen by \lbisc{} approximately maximize (\ref{eq:action-opt}). 
Therefore, by comparing the term in $V\tilde{r}(\bv{\mu}(t)) + \tilde{d}(t) [\rho-\tilde{C}(\bv{\mu}(t)) ]$ in (\ref{eq:drift-ineq-2}) and the definition of $g_h(\gamma)$ in (\ref{eq:dual-state}), we have: 
\begin{eqnarray} \Delta(t) +\Delta_a(t) - V\tilde{r}(\bv{\mu}(t))   \leq B_1 - g_{z(t)}(\tilde{d}(t)). \nonumber
\end{eqnarray}  
Here $B_1\triangleq B+E_{tot}$ and $B\triangleq \rho_{\max}^2+M^2C_{\max}^2$. 

Now assume without loss of generality that $z(0)=z$ and let $t_n$ be the $n$-th return time for $z(t)$ to visit $z$.\footnote{$z(t)$ is a finite-state irreducible and aperiodic Markov chain. Hence, the return time is well defined for each state.}  We get: 
\begin{eqnarray*}
\hspace{-.2in}&&\sum_{t=t_n}^{t_{n+1}-1}[\Delta(t) +\Delta_a(t) - V\tilde{r}(\bv{\mu}(t))]  \\
\hspace{-.2in}&&   \leq B_1 (t_{n+1}-t_n) -\sum_{t=t_n}^{t_{n+1}-1} g_{z(t)}(\tilde{d}(t)) \nonumber\\
\hspace{-.2in}&& \stackrel{(*)}{\leq} B_1 (t_{n+1}-t_n) -\sum_{t=t_n}^{t_{n+1}-1} g_{z(t)}(\tilde{d}(t_n)) +\sum_{t=t_n}^{t_{n+1}-1} (t-t_n)e_{\max} \\
\hspace{-.2in}&&   \leq B_1 (t_{n+1}-t_n) -\sum_{t=t_n}^{t_{n+1}-1} g_{z(t)}(\tilde{d}(t_n)) +\frac{T_n^2  - T_n}{2}e_{\max}. 
\end{eqnarray*} 
Here $e_{\max} = (MC_{\max}+\rho)^2$ and $T_n\triangleq t_{n+1}-t_n$ denotes the length of the $n$-th return period, and (*) follows since $|g_{z(t)}(\tilde{d}(t_n)) - g_{z(t)}(\tilde{d}(t))|\leq |\tilde{d}(t_n) - \tilde{d}(t) | (MC_{\max}+\rho)$, and $|\tilde{d}(t_n) - \tilde{d}(t) |\leq (t-t_n)(MC_{\max}+\rho)$. 

Taking an expectation over  $T_n$,  and using that for any state $z_h$, the expected time to visit $z_h$ during the return period to $z$ is $\pi_{h}/\pi_{z}$ and that $\overline{T}_z=1/\pi_{z}$ \cite{aldousmarkovchainbook}, we get: 
\begin{eqnarray*}
\hspace{-.3in}&&\expect{\sum_{t=t_n}^{t_{n+1}-1}[\Delta(t) +\Delta_a(t) - V\tilde{r}(\bv{\mu}(t))]  \left.|\right. z(t_n)=z, \tilde{d}(t_n) }  \\
\hspace{-.3in}&&\qquad\quad   \leq B_1\overline{T}_z - \overline{T}_z\sum_{h}\pi_{h} g_{z(t)}(\tilde{d}(t_n)) +\frac{\overline{T}_z^2  - \overline{T}_z}{2}e_{\max}\\
\hspace{-.3in}&&\qquad\quad \leq B_1\overline{T}_z - V\overline{T}_zI(\rho) +\frac{\overline{T}_z^2  - \overline{T}_z}{2}e_{\max}
\end{eqnarray*} 
Here in the last step we have used $g_{\bv{\pi}}(\gamma)\geq g_{\bv{\pi}}(\gamma^*) \geq VI(\rho)$ in  (\ref{eq:dual-primal}). Therefore, taking an expectation over $\tilde{d}(t_n)$ and  taking a telescoping sum over $\{t_0, t_1, ...t_n\}$, and using an argument almost identical as in the proof of Theorem 2 in \cite{huangneely_qlamarkovian}, we obtain: 
\begin{eqnarray}
\hspace{-.3in}&&\liminf_{n\rightarrow\infty}\frac{1}{n}\sum_{t=0}^{ n-1}\expect{\tilde{r}(t) \left.|\right. z(0)=z} \geq I(\rho) - B_1/V \label{eq:intermediate} \\
\hspace{-.3in}&&\quad - \frac{e_{\max}(\overline{T}_z^2  - \overline{T}_z )}{2V\overline{T}_z} + \frac{1}{V}\liminf_{n\rightarrow\infty}\frac{1}{n}\sum_{t=0}^{n-1}\expect{\Delta_a(t)\left.|\right. z(0)=z}. \nonumber  
\end{eqnarray}
It remains to show that the last term is small. To do so, recall its definition $\Delta_a(t)=-(\gamma^*_T - \theta)^+[\rho - \tilde{C}(\bv{\mu}(t))]$. According to \lbisc, $\gamma^*_T$ is fixed at time $T$. Hence, we have (from now on we drop the conditioning on $z(0)=z$ for brevity): 
\begin{eqnarray*}
\hspace{-.3in}&&\liminf_{n\rightarrow\infty}\frac{1}{n}\sum_{t=0}^{n-1}\expect{\Delta_a(t)} \\
\hspace{-.3in}&& = - (\gamma^*_T - \theta)^+\liminf_{n\rightarrow\infty}\frac{1}{n}\sum_{t=0}^{n-1}\expect{\rho - \tilde{C}(\bv{\mu}(t))}
\end{eqnarray*}
We thus want to show that this term is $O(1)$. To do so, note that $\rho$ and $\tilde{C}(\bv{\mu}(t))$ are the service and arrival rates to the \emph{actual} queue $d(t)$. From the queueing dynamics we have: 
\begin{eqnarray}
&& \liminf_{n\rightarrow\infty}\frac{1}{n}\sum_{t=0}^{n-1}\expect{\rho - \tilde{C}(\bv{\mu}(t)) }\label{eq:prob-bdd}\\ 
&&\qquad\leq \liminf_{n\rightarrow\infty}\frac{1}{n}\sum_{t=0}^{n-1} \expect{1_{d(t)<\rho}}(\rho+C_{\max})\nonumber \\
&&\qquad= \liminf_{n\rightarrow\infty}\frac{1}{n}\sum_{t=0}^{n-1} \prob{  d(t)<\rho }(\rho+C_{\max}). \nonumber
\end{eqnarray}
Hence, it remains to show that $\liminf_{n\rightarrow\infty}\frac{1}{n}\sum_{t=0}^{n-1} \prob{  d(t)<\rho }$ is small. 

Recall that  $\hat{\gamma}^*$ is the optimal solution for $\hat{g}_{\bv{\pi}}(\gamma)$ defined in (\ref{eq:dual-tilde}), and that $\hat{g}_{\bv{\pi}}(\gamma)$  is polyhedral with parameter $\beta=\Theta(1)$ according to Assumption \ref{assumption:poly}. 
Hence, Theorem 1 in \cite{huangneely_dr_tac} shows that there exist $\Theta(1)$ constants $N_1$, $D$ and $\eta_1>0$, such that at every time $t$, if $|\tilde{d}(t) - \hat{\gamma}^*|\geq D$, 
\begin{eqnarray}
\expect{|\tilde{d}(t+N_1) - \hat{\gamma}^*| \left.|\right. \tilde{d}(t)} \leq |\tilde{d}(t) - \hat{\gamma}^*| - \eta_1. \label{eq:t1-drift} 
\end{eqnarray}
Using $\theta = \max(\frac{V\log(V)^2}{\sqrt{N(T)}},\log(V)^2)$, we see that when $V$ is large, $\theta/2\geq d_{\gamma} = \frac{cV\log(V)}{\sqrt{N(T)}}$. Combining it with   $\tilde{d}(t) = d(t) +\gamma^*_T - \theta$  (conditioning on $\{| \gamma^*_T - \gamma^*  | \leq d_{\gamma}\}$, we have $(\gamma^*_T - \theta)^+=\gamma^*_T - \theta$) and  $ \hat{\theta}\triangleq\hat{\gamma}^* - \gamma^*_T+\theta\in [\theta/2, \theta]$,  the above implies that whenever $|d(t) - \hat{\theta} |\geq D$, 
\begin{eqnarray}
\expect{|d(t+N_1) - \hat{\theta} | \left.|\right. \tilde{d}(t)} \leq |d(t) - \hat{\theta}| - \eta_1. 
\end{eqnarray}
Using the same proof argument in Theorem 1 in \cite{huangneely_dr_tac}, we can show that there exist $\Theta(1)$ positive constants $c_1$ and $c_2$ that: %
\begin{eqnarray}
\limsup_{t\rightarrow\infty}\frac{1}{t}\sum_{\tau=0}^{t-1}\prob{ |d(t)- \hat{\theta}| >D + l}\leq c_1e^{-c_2l}. \label{eq:bdd-d-deviation}
\end{eqnarray}
This shows that $\overline{d(t)} =\Theta(\hat{\theta}) = O( \max(\frac{V\log(V)^2}{\sqrt{N(T)}},\log(V)^2) )$. 

Since $D=\Theta(1)$ and $\theta\geq\log(V)^2$, it can be seen that when $V$ is large, 
\begin{eqnarray}
\hspace{-.3in}&& \liminf_{n\rightarrow\infty}\frac{1}{n}\sum_{t=0}^{n-1}\expect{\rho - \tilde{C}(\bv{\mu}(t)) }\label{eq:prob-bdd2}\\ 
\hspace{-.3in}&& \leq \liminf_{n\rightarrow\infty}\frac{1}{n}\sum_{t=0}^{n-1} \prob{ | d(t) - \hat{\theta}|>D+\frac{2\log(V)}{c_2}}(\rho+C_{\max}) \nonumber \\
\hspace{-.3in}&& = O(1/V^2). 
\end{eqnarray}
Combining with the fact that $\gamma^*_T-\theta=\Theta(V)$, we conclude that the last term in (\ref{eq:intermediate}) is $O(1/V)$. This completes the proof of (\ref{eq:intelligence-bdd}). 
\end{proof}

\section*{Appendix C -- Proof of Theorem \ref{theorem:convergence}}
We will use of the following technical lemma from \cite{bertsekasoptbook} for our proof.
\begin{lemma}\label{lemma:exp-time} \cite{bertsekasoptbook} 
Let $\script{F}_n$ be filtration, i.e., a sequence of increasing $\sigma$-algebras with $\script{F}_n\subset\script{F}_{n+1}$. Suppose the sequence of random variables $\{y_n\}_{n\geq0}$ satisfy:
\begin{eqnarray} 
\expect{||y_{n+1}-y^*||\left.|\right. \script{F}_n} \leq \expect{||y_{n}-y^*|| \left.|\right. \script{F}_n} -u_n, \label{eq:exp-dec}
\end{eqnarray}
where $u_n$ takes the following values:
\begin{eqnarray}
u_n=\left\{\begin{array}{ll} u & \textrm{if  $||y_{n}-y^*||\geq D$}, \\0 &\textrm{else}.\end{array}\right.\label{eq:eta-n}
\end{eqnarray}
Here $u>0$ is a given constant.
 Then, by defining $N_{D}\triangleq\inf\{k \left.|\right. \|y_{n}-y^*\| \leq D\}$, we have:
\begin{eqnarray}
\expect{N_D} \leq ||y_{0}-y^*|| /u. \,\, \Diamond
\end{eqnarray}
\end{lemma}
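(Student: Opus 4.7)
The plan is to prove this via the standard supermartingale-hitting-time technique. The key observation is that before time $N_D$ the process $\|y_n - y^{*}\|$ decreases in conditional expectation by a fixed amount $u$ per step, so subtracting a linear drift $un$ from the absolute distance (stopped at $N_D$) produces a non-negative supermartingale, whose initial value then directly controls $\expect{N_D}$.

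First I would let $Z_n \triangleq \|y_n - y^{*}\|$. Since $y_n$ is $\script{F}_n$-measurable, the hypothesis simplifies to $\expect{Z_{n+1}\left.|\right. \script{F}_n} \leq Z_n - u_n$, where $u_n = u\cdot 1_{\{Z_n\geq D\}}$ is itself $\script{F}_n$-measurable. By definition of $N_D = \inf\{k: Z_k \leq D\}$, on the event $\{n < N_D\}$ one has $Z_n > D$, hence $u_n = u$. So for every $n$,
\begin{equation*}
\expect{Z_{(n+1)\wedge N_D}\left.|\right.\script{F}_n} \;\leq\; Z_{n\wedge N_D} - u\cdot 1_{\{n<N_D\}},
\end{equation*}
obtained by splitting on $\{n<N_D\}$ (apply the drift inequality with $u_n=u$) and $\{n\geq N_D\}$ (both sides equal $Z_{N_D}$).

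Next, define the stopped process $M_n \triangleq Z_{n\wedge N_D} + u\,(n\wedge N_D)$. The inequality above implies $\expect{M_{n+1}\left.|\right.\script{F}_n}\leq M_n$, so $\{M_n\}$ is a non-negative supermartingale. Taking unconditional expectations and telescoping gives $\expect{M_n}\leq\expect{M_0} = Z_0 = \|y_0 - y^{*}\|$. Using $Z_{n\wedge N_D}\geq 0$, this yields the uniform bound $u\cdot\expect{n\wedge N_D} \leq \|y_0 - y^{*}\|$ for every $n$. Sending $n\to\infty$ and applying monotone convergence to the increasing sequence $n\wedge N_D$ gives $u\cdot\expect{N_D}\leq \|y_0 - y^{*}\|$, which is the claim after dividing by $u>0$. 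As a byproduct this also establishes $N_D<\infty$ almost surely.

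The proof is essentially routine once the stopped process is identified; the only subtlety is the careful case-split used to verify the supermartingale property on $\{n<N_D\}$ versus $\{n\geq N_D\}$, and making sure that $u_n$ is replaced by the constant $u$ only where it is legitimate (i.e., strictly before the hitting time). The rest is a direct application of monotone convergence and non-negativity. I expect no real obstacle beyond bookkeeping; the hardest conceptual step is simply spotting the right auxiliary martingale $M_n$, which is the standard ``add back the drift'' trick for hitting-time bounds.
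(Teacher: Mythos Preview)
Your argument is correct: the stopped-and-compensated process $M_n = \|y_{n\wedge N_D}-y^*\| + u\,(n\wedge N_D)$ is indeed a nonnegative supermartingale, and the bound $\expect{N_D}\leq \|y_0-y^*\|/u$ follows by monotone convergence exactly as you wrote. The case-split on $\{n<N_D\}$ versus $\{n\geq N_D\}$ is handled cleanly, and the replacement of $u_n$ by $u$ strictly before the hitting time is justified since $N_D=\inf\{k:\|y_k-y^*\|\leq D\}$ guarantees $\|y_n-y^*\|>D$ on $\{n<N_D\}$.

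There is nothing to compare against in the paper itself: the lemma is quoted from \cite{bertsekasoptbook} and invoked without proof in Appendix~C, so the paper contains no argument for this statement. Your supermartingale/optional-stopping derivation is precisely the standard proof one finds in the cited reference, so in that sense your approach matches the intended one.
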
 

We now present the proof. 
\begin{proof} (Theorem \ref{theorem:convergence}) 
To start,  note that the first $T$ slots are spent learning $\hat{\epsilon}$, $\bv{\delta}$, and $\hat{\bv{\pi}}$. Lemma \ref{lemma:multiplier-bdd} shows that  after $T$ slots, with high probability, we have $| \gamma^*_T - \gamma^*  | \leq \frac{cV\log(V)}{\sqrt{N(T)}}$ for some $c=\Theta(1)$.  Using the definition of $\tilde{d}(t)$, this implies that when $V$ is large, 
\begin{eqnarray}
&& |\tilde{d}(T) - \gamma^*| \leq \theta/2 \\
&& = \max(V\log(V)^2/\sqrt{N(T)}, \log(V)^2)/2. \nonumber
\end{eqnarray}
Using (\ref{eq:t1-drift}) in the proof of Theorem \ref{theorem:lbisc-int-budget}, and applying  Lemma \ref{lemma:exp-time}, we see then the expected time for $\tilde{d}(t)$ to get to within $D$ of $\hat{\gamma}^*$, denoted by $\tilde{T}_D$, satisfies: 
\begin{eqnarray}
\expect{ \tilde{T}_D } \leq N_1\max(V\log(V)^2/\sqrt{N(T)}, \log(V)^2)/2\eta_1. 
\end{eqnarray}
Here recall that $N_1=\Theta(1)$ is a constant in (\ref{eq:t1-drift}).  

Since $|\gamma^* -\hat{\gamma}^*|\leq  d_{\gamma}=cV\log(V)/\sqrt{N(T)}$, by defining $D_1 = cV\log(V)/\sqrt{N(T)} +D$, we conclude that: 
\begin{eqnarray}
\expect{ T^{\mathtt{LBISC}}_{D_1} } \leq N_1\max(\frac{V\log(V)^2}{2\eta_1\sqrt{N(T)}}, \frac{\log(V)^2}{2\eta_1})  + T.  
\end{eqnarray}

In the case of \bisc, one can similarly show that there exists $\Theta(1)$ constants $N_1$,  $D$ (only related to $z(t)$ and $\beta$), and $\eta_2$, so that, 
\begin{eqnarray}
\expect{|d(t+N_1) - \gamma^*| \left.|\right. d(t)} \leq |d(t) - \gamma^*| - \eta_2. \label{eq:t1-drift2} 
\end{eqnarray}
Since $\gamma^*=\Theta(V)$ \cite{huangneely_dr_tac} and $d(0)=0$, we conclude that: 
\begin{eqnarray}
\expect{ T^{\mathtt{LBISC}}_{D} } \leq N_1V/\eta_2.  
\end{eqnarray}
This completes the proof of the theorem. 
\end{proof}

\section*{Appendix D -- Proof of Lemmas \ref{lemma:mle-bdd} and \ref{lemma:multiplier-bdd}} 
We will make use of the following results from \cite{gillman-mc-deviation-93}
\begin{theorem}\label{theorem:mc-deviation} \cite{gillman-mc-deviation-93}. 
For a finite state irreducible and aperiodic and reversible Markov chain with state space with state space $\script{G}$ 
steady-state distribution $\bv{\pi}$ and an initial distribution $\bv{q}$. Let $y=\max_{s_1, s_2\in\script{G}}\frac{\pi_{s_1}}{\pi_{s_2}}$ and let $N_q=\|\frac{q_{s}}{\pi_{s}},\, s\in\script{G}  \|_2$. Moreover, let $a=1-\lambda_2$ where $\lambda_2<1$  is the second largest eigenvalue of the transition matrix. Then, for any subset $A\subset\script{G}$ and let $t_n$ be the number of visits to $A$ in $n$ steps. We have for any $b$ that: 
\begin{eqnarray}
\prob{|t_n - n\pi_{A}|\geq b} \leq 2N_qe^{-b^2a/20ny }. \label{eq:prob-deviation}
\end{eqnarray}
\end{theorem}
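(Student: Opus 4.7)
The plan is to follow the standard moment-generating-function strategy for Chernoff-type bounds, adapted to the Markov-chain setting via spectral perturbation of an operator ``twisted'' by the exponential weight. Since the chain is reversible and aperiodic, the transition matrix $P$ is self-adjoint in the weighted inner product $\langle f, g\rangle_\pi = \sum_s \pi_s f_s g_s$, which is exactly the structure needed to reduce the tail bound to a question about eigenvalues of a symmetric matrix.

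First, I would reduce the tail probability to an exponential moment. Let $X_i = 1_{\{S_i \in A\}}$ so that $t_n = \sum_{i=1}^n X_i$. For any $\lambda>0$, Markov's inequality gives
\begin{eqnarray*}
\prob{t_n - n\pi_A \geq b} \,\leq\, e^{-\lambda(n\pi_A + b)}\,\expectm{e^{\lambda t_n}}.
\end{eqnarray*}
An identical argument with $\lambda<0$ yields the lower tail, contributing the factor $2$ in the final bound. Next, write the MGF spectrally: let $D_\lambda$ be the diagonal matrix with entries $e^{\lambda 1_A(s)}$, so $\expectm{e^{\lambda t_n}} = q^\top (D_\lambda P)^n D_\lambda \mathbf{1}$. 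Symmetrize by conjugating with $\Pi^{1/2}$ where $\Pi = \text{diag}(\pi)$: the matrix $M = \Pi^{1/2} P \Pi^{-1/2}$ is symmetric with the same spectrum as $P$, top eigenvalue $1$ with eigenvector $\sqrt{\pi}$, and spectral gap $a = 1-\lambda_2$. Applying Cauchy--Schwarz in the $\pi$-weighted norm and tracking how the change of measure from $q$ to $\pi$ produces the $N_q$ factor, I obtain
\begin{eqnarray*}
\expectm{e^{\lambda t_n}} \,\leq\, N_q\, \rho(\lambda)^n,
\end{eqnarray*}
where $\rho(\lambda)$ is the spectral radius of the symmetrized twisted operator $U(\lambda) := D_\lambda^{1/2} M\, D_\lambda^{1/2}$.

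The main technical step, and the hardest part, is bounding $\rho(\lambda)$ uniformly in $\lambda$. Here I would use analytic perturbation theory (Kato--Rellich) applied to the one-parameter family $U(\lambda)$, expanded around $\lambda = 0$ where $U(0) = M$ has simple top eigenvalue $1$ with eigenvector $\sqrt{\pi}$ separated from the rest of the spectrum by the gap $a$. The first-order perturbation in $\lambda$ contributes $\pi_A$ to the top eigenvalue (this is where $\pi_A$ appears in the exponent), and the second-order correction is controlled by the inverse of the spectral gap through the reduced resolvent $(I-M)^{-1}$ restricted to the orthogonal complement of $\sqrt{\pi}$. The quantity $y = \max_{s_1,s_2} \pi_{s_1}/\pi_{s_2}$ enters when estimating matrix entries of $(D_\lambda-I)$ against eigenvectors normalized in the Euclidean rather than the $\pi$-weighted norm. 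A careful bookkeeping of constants, valid for $\lambda$ in a neighborhood of $0$, gives
\begin{eqnarray*}
\rho(\lambda) \,\leq\, 1 + \lambda \pi_A + \tfrac{5\, \lambda^2 y}{a},
\end{eqnarray*}
and hence $\rho(\lambda)^n \leq \exp\bigl(n\lambda\pi_A + 5 n \lambda^2 y / a\bigr)$ by $1+x\le e^x$.

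Finally, I would combine and optimize. Plugging into the Markov bound yields
\begin{eqnarray*}
\prob{t_n - n\pi_A \geq b} \,\leq\, N_q \exp\!\Bigl( -\lambda b + \tfrac{5 n \lambda^2 y}{a}\Bigr),
\end{eqnarray*}
and the right-hand side is minimized at $\lambda^\star = ab/(10 n y)$, giving exponent $-b^2 a/(20\,n\,y)$. Symmetrizing with the lower tail produces the prefactor $2 N_q$ and completes the proof. The principal obstacle is the spectral perturbation bound on $\rho(\lambda)$: the qualitative statement that $\rho(\lambda) \approx 1 + \lambda\pi_A + O(\lambda^2/a)$ is immediate from perturbation theory, but squeezing out the explicit constant $20$ (together with the correct dependence on $y$) requires a quantitative resolvent estimate rather than an abstract asymptotic, and this is what distinguishes Gillman's argument from a softer large-deviation proof.
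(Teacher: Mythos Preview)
The paper does not prove this theorem at all: it is quoted verbatim from \cite{gillman-mc-deviation-93} and used as a black box in the proof of Lemma~\ref{lemma:mle-bdd}. There is therefore no ``paper's own proof'' to compare your proposal against.

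That said, your outline is essentially the correct sketch of Gillman's original argument. The reduction to the spectral radius of the twisted operator $D_\lambda^{1/2} M D_\lambda^{1/2}$ via reversibility, followed by analytic perturbation around $\lambda=0$ with the gap $a$ controlling the second-order term, and then optimizing over $\lambda$, is exactly the strategy. The one place to be careful if you were to carry this out rigorously is the perturbation estimate $\rho(\lambda)\le 1+\lambda\pi_A+5\lambda^2 y/a$: Gillman's paper does not obtain this via Kato--Rellich asymptotics but by a more hands-on estimate that tracks the constant explicitly and is valid for a specific range of $\lambda$ (not just ``a neighborhood of $0$''); you correctly flag this as the hardest part, but your sketch as written would only give $O(\lambda^2/a)$ with an unspecified constant, not the $5$ needed to land on $20$ after optimization.
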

\begin{proof} (Lemma \ref{lemma:mle-bdd}) 
We first estimate the number of times state $A_m(t)=0$ is visited, denoted by $N_{m0}$. Using Theorem \ref{theorem:mc-deviation}, with $b=\sqrt{N(T)}\log(V)$ and using the fact that both $N_q$, $y$ and $a>0$ are $\Theta(1)$, we have:  
\begin{eqnarray}
\hspace{-.3in}&&\prob{|N_{m0}-N(T)\pi_{m0}| \geq \sqrt{N(T)}\log(V)} \nonumber \\
\hspace{-.3in}&&\qquad\qquad\qquad\qquad\qquad\qquad \leq c_{m1}e^{-c_{m2}\log(V)^2}. \label{eq:pi-0-bdd}
\end{eqnarray}
Thus, with high probability, the number of times we visit state $A_m(t)=0$ is within $N(T)\pi_{m0} \pm \sqrt{N(T)}\log(V)$. This implies that we try to sample the transition $0\rightarrow1$ at least $N_{m0}=N(T)\pi_{m0} -\sqrt{N(T)}\log(V)$ times with high probability, and each time we succeed with probability $\epsilon_m$. 
Thus, using the concentration bound in \cite{chung_concentration} for i.i.d. Bernoulli variables, we have that the number of times  transition $0\rightarrow1$ takes place, denoted by $N_{m01}$, satisfies: 
\begin{eqnarray}
\hspace{-.2in}&& \prob{|N_{m01} - \epsilon_mN_{m0}| \geq 0.95\sqrt{N(T)}\log(V)  } \label{eq:prob-deviation-1} \\
\hspace{-.2in}&&\,\,   \leq 2e^{\frac{- (0.95)^2N(T)\log(V)^2}{ 2 (\epsilon_mN_{m0} -\sqrt{N(T)}\log(V) + 0.95\sqrt{N(T)}\log(V) /3)  }}\leq 2e^{-\log(V)^2/2}.  \nonumber
\end{eqnarray}
Combining (\ref{eq:pi-0-bdd}) and (\ref{eq:prob-deviation-1}), we conclude that: 
\begin{eqnarray}
\hspace{-.2in}&&\prob{|\hat{\epsilon}_m - \epsilon_m|\leq \frac{\log(V)}{\sqrt{N(T)}}}\label{eq:prob-deviation-2} \\
\hspace{-.2in}&&\qquad\qquad\geq 1- 2e^{-\log(V)^2/2} -  c_{m1}e^{-c_{m2}\log(V)^2} \nonumber \\
\hspace{-.2in}&&\qquad\qquad\geq 1- e^{-\log(V)^2/4}. \nonumber 
\end{eqnarray}
We can now repeat the above argument for $\hat{\delta}_m$ to obtain a similar result. Then,  (\ref{eq:error-bdd}) can be obtained by applying the union bound. 
\end{proof}

\begin{proof} (Lemma \ref{lemma:multiplier-bdd})
%
In  \lbisc,  we actually solve $\hat{g}_{\hat{\bv{\pi}}}(\gamma)=\sum_h\hat{\pi}_h\hat{g}_{h}(\gamma)$ for computing $\gamma^*_T$. Here $\hat{g}_{h}(\gamma)$ is due to the fact that we use the estimated values $\hat{\delta}_m$ and $\hat{\epsilon}_m$ in the dual function. 

Define $e(\pi_h)=\hat{\pi}_h - \pi_h $ and $e(a^{(i_h)}_m) = \hat{a}^{(i_h)}_m -a^{(i_h)}_m$. 
Then, we can rewrite $\hat{g}_{\hat{\bv{\pi}}}(\gamma)$ as:
\begin{eqnarray}
\hspace{-.2in}&& \hat{g}_{\hat{\bv{\pi}}}(\gamma) \triangleq  \sum_h [\pi_h + e(\pi_h)]\sum_m\sup_{\bv{\mu}^{(h)}_p}\bigg\{ V[a^{(i_h)}_m +e(a^{(i_h)}_m) ]  \label{eq:dual-error}\\
\hspace{-.2in}&&\qquad\qquad\qquad\qquad\qquad\qquad \times[\mu^{(h)}_{mp}r_{mp} + (1- \mu^{(h)}_{mp})r_{mc}] \nonumber \\
\hspace{-.2in}&&\qquad  - \gamma  [C_m(\mu^{(h)}_{mp}, z_h)      +(1-\mu^{(h)}_{mp}) [a^{(i_h)}_m+e(a^{(i_h)}_m)] \overline{C}_m - \rho]\bigg\}. \nonumber
\end{eqnarray}

Using Lemma \ref{lemma:mle-bdd}, we see that when $V$ is large, with probability $1-2Me^{-\log(V)^2/4}$, $e(a^{(i_h)}_m)\leq \frac{\log(V)}{\sqrt{N(T)}}$, which also implies that the estimated $\hat{\pi}_h$ is within $O(\frac{\log(V)}{\sqrt{N(T)}})$ of ${\pi}_h$. In this case, 
Assumption \ref{assumption:bdd-LM} implies that there exist a set of actions that achieve: 
\begin{eqnarray}
 \sum_h\hat{\pi}_h\sum_{j=1}^3\theta^{(h)}_j\sum_m [C_m(\mu^{(h)}_{mpj}, z_h)   +(1-\mu^{(h)}_{mpj}) \hat{a}^{(i_h)}_m\overline{C}_m]\leq\rho_0, \nonumber
\end{eqnarray}
for some $\rho_0=\rho-\eta>0$ for some $\eta>0$. 
Using the fact that $e(a^{(i_h)}_m)\leq \frac{\log(V)}{\sqrt{N(T)}}$, we see that the same set of actions ensures that: 
\begin{eqnarray}
 \hspace{-.3in}&& \sum_h\hat{\pi}_h\sum_{j=1}^3\theta^{(h)}_j\sum_m [C_m(\mu^{(h)}_{mpj}, z_h) \\
 \hspace{-.3in}&&\qquad\qquad\qquad   +(1-\mu^{(h)}_{mpj}) \hat{a}^{(i_h)}_m\overline{C}_m]\leq\rho-\eta/2. \nonumber
\end{eqnarray}
Using Lemma 1 in \cite{huang-learning-sig-14}, this implies $\gamma^*_T\leq \xi\triangleq \frac{2Vr_{p}}{\eta}$. 
Therefore, we have: 
\begin{eqnarray*}
 \hspace{-.2in} &&\hat{g}_{\hat{\bv{\pi}}}(\gamma^*_T)\geq   g_{\bv{\pi}}(\gamma^*_T) -\sum_h|e(\pi_h)| M(Vr_p + \frac{2Vr_{p}}{\eta}C_{\max})\\ 
 \hspace{-.2in}&&\qquad \quad- \sum_m (V\max_m| e(a^{(i_h)}_m) |r_p + \frac{2Vr_{p}}{\eta} \max_m| e(a^{(i_h)}_m) | C_{\max} ). 
\end{eqnarray*}
Similarly, we have: 
\begin{eqnarray*}
 \hspace{-.2in} &&\hat{g}_{\hat{\bv{\pi}}}(\gamma^*)\leq   g_{\bv{\pi}}(\gamma^*) + \sum_h|e(\pi_h)| M(Vr_p + \frac{2Vr_{p}}{\eta}C_{\max})\\ 
 \hspace{-.2in}&&\qquad \quad + \sum_m (V\max_m| e(a^{(i_h)}_m) |r_p + \frac{2Vr_{p}}{\eta} \max_m| e(a^{(i_h)}_m) | C_{\max} ). 
\end{eqnarray*}
Denoting the last two terms as $e_{tot}$, we see that $e_{tot}=\Theta(V)$. Using $\hat{g}_{\hat{\bv{\pi}}}(\gamma^*_T)\leq \hat{g}_{\hat{\bv{\pi}}}(\gamma^*)$, we get: 
\begin{eqnarray}
g_{\bv{\pi}}(\gamma^*_T) \leq g_{\bv{\pi}}(\gamma^*) + 2 e_{tot}. 
\end{eqnarray}
Using the polyhedral property (\ref{eq:polyhedral}), we conclude that: 
\begin{eqnarray}
|\gamma^*_T - \gamma^* |\leq 2e_{tot}/\beta. 
\end{eqnarray}
Finally using the fact that $\max(|e(\pi_h)|, |e(a^{(i_h)}_m|)\leq \frac{\log(V)}{\sqrt{N(T)}}$ proves the bound for $|\gamma^*_T - \gamma^* |$. The bound for $|\hat{\gamma}^* - \gamma^*|$ can be similarly proven. 
\end{proof}

\end{document}